\theoremstyle{plain}
\newtheorem{theorem}{Theorem}[section]
\newtheorem{cor}{Corollary}[theorem]
\theoremstyle{definition}
\newtheorem{definition}{Definition}[section]
\newtheorem{remark}{Remark}[section]
\begin{document}
\title[Approximate Birkhoff-James orthogonality ]{ Approximate Birkhoff-James orthogonality in the space of  bounded linear operators}
\author[ Kallol Paul, Debmalya Sain and  Arpita Mal ]{ Kallol Paul, Debmalya Sain and Arpita Mal}

\newcommand{\acr}{\newline\indent}

\address[Paul]{Department of Mathematics\\ Jadavpur University\\ Kolkata 700032\\ West Bengal\\ INDIA}
\email{kalloldada@gmail.com}

\address[Sain]{Department of Mathematics\\ Indian Institute of Science\\ Bengaluru 560012\\ Karnataka \\India\\ }
\email{saindebmalya@gmail.com}

\address[Mal]{Department of Mathematics\\ Jadavpur University\\ Kolkata 700032\\ West Bengal\\ INDIA}
\email{arpitamalju@gmail.com}

\thanks{ The research of the second author is sponsored by Dr. D. S. Kothari Postdoctoral fellowship. The third author would like to thank UGC, Govt. of India for the financial support.} 

\subjclass[2010]{Primary 46B28, 47L05 Secondary 47L25}
\keywords{Approximate Birkhoff-James orthogonality; Bounded linear operator}

\begin{abstract}
There are two notions of approximate Birkhoff-James orthogonality in  a normed space.
We characterize both the notions of approximate Birkhoff-James orthogonality in the space of bounded linear operators defined on a normed space. A  complete characterization of approximate Birkhoff-James orthogonality in the space of bounded linear operators defined on   Hilbert space of any dimension is obtained which improves on the recent result  by Chmieli\'nski et al. [ J. Chmieli\'nski, T. Stypula and P. W\'ojcik, \textit{Approximate orthogonality in normed spaces and its applications}, Linear Algebra and its Applications,  \textbf{531}  (2017), 305--317.], in which they characterized  approximate Birkhoff-James orthogonality of linear operators on finite dimensional Hilbert space and also of compact operators on any Hilbert space.

\end{abstract}

\maketitle

\section{Introduction.} 
There are various notions of orthogonality in a normed space, which are in general different, if the norm is not induced by an inner product. Among all the notions of orthogonality, Birkhoff-James orthogonality\cite{B,J} plays a very important role in the study of geometry of normed spaces. In \cite{J} James elaborated how the notions like smoothness, rotundity etc. of the space can be studied using Birkhoff-James orthogonality. This notion has its importance in studying the geometric properties of the  operator space. Recently Paul et al. \cite{PSG} obtained a sufficient condition for the smoothness of a bounded linear operator using  Birkhoff-James orthogonality of bounded linear operators. Due to the importance of Birkhoff-James orthogonality  it has been generalized by Dragomir\cite{D} and Chmieli\'nski\cite{C} and is known as approximate Birkhoff-James orthogonality. In this paper we characterize approximate Birkhoff-James orthogonality of bounded linear operators which explores the interrelation between that of the  ground space and the space of bounded linear operators. To proceed in details we fix some notations and terminologies.

Let $\mathbb{X}, \mathbb{Y}$ denote real normed spaces and  $\mathbb{H}$, a real Hilbert space.  Let $B_{\mathbb{X}} $ and $ S_{\mathbb{X}} $ denote the unit ball and the unit sphere of $ \mathbb{X} $ respectively, i.e.,  $ B_\mathbb{X}=\{x \in \mathbb{X} : \|x\| \leq 1\} $ and $ S_\mathbb{X}=\{x \in \mathbb{X} : \|x\|=1\}. $ Let $\mathbb{B}(\mathbb{X}, \mathbb{Y})(\mathbb{K}(\mathbb{X}, \mathbb{Y}))$ denote the space of all bounded(compact) linear operators from $\mathbb{X}$ to $\mathbb{Y}$. We write $\mathbb{B}(\mathbb{X}, \mathbb{Y})= \mathbb{B}(\mathbb{X})$ and $\mathbb{K}(\mathbb{X}, \mathbb{Y})= \mathbb{K}(\mathbb{X})$ if $\mathbb{X}= \mathbb{Y}$. A bounded linear operator $T$ is 
 said to attain norm at $x \in S_{\mathbb{X}}$ if $\|Tx\|=\|T\|$. Let $M_T$ denote the set of all unit elements at which $T$ attains norm, i.e., $ M_T=\{x \in S_{\mathbb{X}}:~ \|Tx\|=\|T\|\}.$ The norm attainment set $M_T$ plays an important role in characterizing Birkhoff-James orthogonality of bounded linear operators \cite{PSG,SP,SPH}. \\

For any two elements $x$ and $y$ in $\mathbb{X}$, $x$ is said to be orthogonal to $y$ in the sense of Birkhoff-James, written as $x \perp_B y$, if $\|x + \lambda y \| \geq \|x\|$ for all real scalars $\lambda.$ If the norm of the space is induced by an inner product then $x \perp_B y$ is equivalent to $ \langle x, y \rangle = 0.$ For $T,A \in \mathbb{B}(\mathbb{X}, \mathbb{Y})$, $T$ is said to be orthogonal to $A$ in the sense of Birkhoff-James, written as $T \perp_B A$, if $\|T + \lambda A \| \geq \|T\|$ for all real scalars $\lambda.$ In an inner product space $\mathbb{H}$ approximate orthogonality is defined in the following way: \\
 Let $\epsilon \in [0,1)$. Then for $x,y \in \mathbb{H},$ $x$ is said to be approximate orthogonal to $y$, written as $ x\perp^{\epsilon}y$, if
\[ |\langle x,y \rangle| \leq \epsilon \|x\|\|y\|. \]
Dragomir  \cite{D} introduced the notion of approximate Birkhoff-James orthogonality in a normed space as follows:\\
Let $ \epsilon \in [0,1) .$ Then for $x, y \in \mathbb{X}$, $x$ is said to be approximate $ \epsilon- $ Birkhoff-James orthogonal to $y$  if 
\[ \|x + \lambda y\| \geq (1-\epsilon) \|x\| ~\forall \lambda \in \mathbb{R}.\] 
Later on, Chmieli\'nski \cite{C} slightly modified the definition in the following way: 
Let $ \epsilon \in [0,1).$ Then for $x, y \in \mathbb{X}$, $x$ is said to be approximate $ \epsilon- $ Birkhoff-James orthogonal to $y$  if 
\[ \|x + \lambda y\| \geq \sqrt{1-\epsilon^2} \|x\| ~\forall \lambda \in \mathbb{R} .\]
In this case, we write $ x \bot_D^{\epsilon} y.$ \\

Recently Chmieli\'nski \cite{C}  introduced another notion of approximate Birkhoff-James orthogonality, defined in the following way: \\
Let $ \epsilon \in [0,1) .$ Then for $x, y \in \mathbb{X}$, $x$ is said to be approximate $ \epsilon- $ Birkhoff-James orthogonal to $y$  if 
\[ \|x + \lambda y\|^2 \geq  \|x\|^2 - 2 \epsilon \|x\| \| \lambda y\| ~\forall \lambda \in \mathbb{R} .\] 
In this case, we write $ x \bot_B^{\epsilon} y.$ Chmieli\'nski et al. \cite{CSW} characterized ``$ x \bot_B^{\epsilon} y$'' for real normed spaces as in the following theorem: 
\begin{theorem} [Theorem 2.3,\cite{CSW}]
Let $\mathbb{X}$ be a real normed  space. For $ x, y \in \mathbb{X} $ and $ \epsilon \in [0,1) :$ 
\[ x \bot_B^{\epsilon} y \Leftrightarrow \exists ~z \in ~\mbox{Lin}\{x,y\} : x \bot_B z, \|z-y\| \leq \epsilon \|y\|.\]
\end{theorem}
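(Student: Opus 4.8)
The plan is to prove the characterization
\[ x \bot_B^{\epsilon} y \Leftrightarrow \exists ~z \in \mbox{Lin}\{x,y\} : x \bot_B z, \|z-y\| \leq \epsilon \|y\|\]
by handling the two implications separately, with the reverse implication being the easy direction. For the ``$\Leftarrow$'' part, suppose $z = y + tx$ for some $t \in \mathbb{R}$ with $x \bot_B z$ and $\|z - y\| = |t|\|x\| \leq \epsilon \|y\|$. Then for any $\lambda \in \mathbb{R}$ write $x + \lambda y = x + \lambda z - \lambda t x = (1 - \lambda t)x + \lambda z$; using $x \bot_B z$ one gets $\|x + \lambda y\| \geq |1 - \lambda t|\|x\| \cdot$ (roughly), but more carefully $\|(1-\lambda t)x + \lambda z\| \geq \|x\|\,$ only when the coefficient of $x$ is controlled — instead I would use $x \bot_B z$ in the form $\|x + \mu z\| \geq \|x\|$ and the triangle inequality to estimate $\|x + \lambda y\|^2$ directly, expanding and bounding the cross term by $\|x\|\,\|{\lambda z - \lambda y}\| \le \epsilon\|x\|\|\lambda y\|$. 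This is a short computation once the decomposition is set up.

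For the ``$\Rightarrow$'' direction, assume $x \bot_B^{\epsilon} y$, i.e., $\|x + \lambda y\|^2 \geq \|x\|^2 - 2\epsilon\|x\|\,|\lambda|\,\|y\|$ for all $\lambda$. The goal is to locate the point $z$ on the line $\{y + tx : t \in \mathbb{R}\}$ (or in $\mbox{Lin}\{x,y\}$) that is Birkhoff--James orthogonal to $x$ from the other side, i.e., find $t_0$ with $x \bot_B (y + t_0 x)$, and then show $|t_0|\|x\| \leq \epsilon\|y\|$. The existence of such a $t_0$ is a standard fact: the convex function $f(t) = \|y + t x\|$ — or rather one considers $g(\mu) = \|x + \mu(y+t_0x)\|$ — attains its minimum, and at the minimizing parameter the BJ-orthogonality $x \bot_B z$ holds; concretely, by James's characterization there is a supporting functional. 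I would instead phrase it via: the map $t \mapsto \|x + t y\|$... no — the cleanest route is to use that for the line $L = x + \mathbb{R}y$ translated appropriately, but since $x$ itself need not be orthogonal to anything, I look for $z \in \mbox{Lin}\{x,y\}$ with $x \perp_B z$; such $z$ exists (take any $z$ in the $2$-dimensional span for which the norm of $x + \mu z$ is minimized at $\mu = 0$, which exists by a compactness/convexity argument in the plane $\mbox{Lin}\{x,y\}$, choosing $z$ in the ``orthogonal direction'' to $x$). Having fixed such a $z = \alpha x + \beta y$ (we may normalize $\beta = 1$ unless $y \in \mbox{Lin}\{x\}$, a trivial case), I must show $\|z - y\| = |\alpha|\|x\| \leq \epsilon\|y\|$.

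The main obstacle, and the technical heart of the argument, is this last estimate $|\alpha|\|x\| \leq \epsilon\|y\|$. The idea is to exploit the defining inequality of $\bot_B^{\epsilon}$ together with $x \bot_B z$. From $x \bot_B z$ we have a norming functional $\phi \in S_{\mathbb{X}^*}$ with $\phi(x) = \|x\|$ and $\phi(z) = 0$, hence $\phi(y) = -\alpha \phi(x) = -\alpha\|x\|$, which gives $|\alpha|\|x\| = |\phi(y)| \leq \|y\|$ — but that only yields $|\alpha|\|x\| \le \|y\|$, not the factor $\epsilon$. To get the $\epsilon$, I would plug a well-chosen $\lambda$ into $\|x + \lambda y\|^2 \geq \|x\|^2 - 2\epsilon\|x\|\,|\lambda|\,\|y\|$ and use $\|x + \lambda y\| = \|x + \lambda y\|$ evaluated by $\phi$: namely $\|x+\lambda y\| \geq \phi(x + \lambda y) = \|x\| - \lambda\alpha\|x\|$, but also I can pair with the decomposition $x + \lambda y = x + \lambda z - \lambda\alpha x = (1-\lambda\alpha)x + \lambda z$. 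Taking $\lambda$ with the sign of $\alpha$ small and comparing the lower bound $|1 - \lambda\alpha|\|x\| + O(\lambda)$-type estimates coming from $x \perp_B z$ against the upper control, or more directly differentiating the inequality $\|(1-\lambda\alpha)x + \lambda z\|^2 \ge \|x\|^2 - 2\epsilon\|x\||\lambda|\|y\|$ at $\lambda \to 0^+$: the left side is $\|x\|^2 - 2\lambda\alpha\|x\|^2 + o(\lambda)$ wait — one has to be careful since $\|(1-\lambda\alpha)x+\lambda z\| \ge (1-\lambda\alpha)\|x\|$ is the wrong direction. The right manoeuvre: use $\|x + \lambda z\| \ge \|x\|$ for all $\lambda$ (that is $x\perp_B z$), write $x + \lambda y$ in terms of it, and feed into the $\epsilon$-inequality to extract a linear inequality in $|\alpha|$ as $\lambda \to 0$, forcing $|\alpha|\|x\| \le \epsilon \|y\|$. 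I would carry out this one-sided derivative / small-$\lambda$ expansion carefully, as getting the constant exactly $\epsilon$ (and not something weaker) is where the argument must be tight; this is precisely where Theorem~2.3's proof in \cite{CSW} does its real work, and I would follow that computation.
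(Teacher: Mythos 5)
Your ``$\Leftarrow$'' direction is essentially fine once written properly: from $x\perp_B z$ and $\|z-y\|\le\epsilon\|y\|$ one gets $\|x+\lambda y\|\ge\|x+\lambda z\|-|\lambda|\,\|z-y\|\ge\|x\|-\epsilon|\lambda|\,\|y\|$, and squaring (treating separately the trivial case where the right-hand side is negative) gives the defining inequality of $\perp_B^{\epsilon}$; note there is no ``cross term'' to expand in a general norm, so the triangle-inequality route is the only one available. (For what it is worth, the paper itself does not prove this statement at all --- it is quoted from \cite{CSW} --- so there is no internal proof to compare against.)

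The ``$\Rightarrow$'' direction, however, has a genuine structural gap. You propose to \emph{first} pick some $z=\alpha x+y$ in $\mbox{Lin}\{x,y\}$ with $x\perp_B z$ (such a $z$ always exists, independently of the hypothesis) and \emph{then} prove $|\alpha|\,\|x\|\le\epsilon\|y\|$. That estimate is false for an arbitrary such $z$: take $\mathbb{R}^2$ with the max norm, $x=(1,1)$, $y=(0,1)$ and $\epsilon=0$. Then $x\perp_B y$, so $x\perp_B^{0}y$, and $x\perp_B(\alpha x+y)$ for every $\alpha\in[-1,0]$, yet $\|(\alpha x+y)-y\|=|\alpha|>0=\epsilon\|y\|$ unless $\alpha=0$. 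Hence no small-$\lambda$ expansion or one-sided derivative computation can rescue the step you defer to \cite{CSW} (``I would follow that computation''): the quantifiers must be reversed, i.e.\ the hypothesis $x\perp_B^{\epsilon}y$ must be used to \emph{select} the right $z$, not to verify the bound for a $z$ chosen blindly. The standard argument does exactly this: one first shows that $x\perp_B^{\epsilon}y$ yields a supporting functional $\phi$ at $x$ ($\|\phi\|=1$, $\phi(x)=\|x\|$) with $|\phi(y)|\le\epsilon\|y\|$ --- this selection, via the one-sided derivatives of the convex map $t\mapsto\|x+ty\|$ at $0$ and the fact that the values $\phi(y)$ over supporting functionals fill the interval between them, is the real content --- and only then defines $z:=y-\frac{\phi(y)}{\|x\|}\,x$, so that $\phi(z)=0$ forces $x\perp_B z$ while $\|z-y\|=|\phi(y)|\le\epsilon\|y\|$ is automatic. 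Since your proposal neither constructs this functional nor supplies any substitute for it, the hard half of the theorem remains unproved.
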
 
It should be noted that in an inner product space, both types of approximate Birkhoff-James orthogonality coincide. However, this is not necessarily true in a normed space. Also note that, in a normed space, both types of approximate Birkhoff-James orthogonality are homogeneous. In this paper we characterize both types of approximate Birkhoff-James orthogonality of bounded linear operators using the norm attainment set. To do so we need the following two definitions introduced in \cite{S} and \cite{SPM} respectively.

\begin{definition}[\cite{S}]
For any two elements $ x, y $ in a real normed  space $ \mathbb{X}, $ let us say that $ y \in x^{+} $ if $ \| x + \lambda y \| \geq \| x \| $ for all $ \lambda \geq 0. $ Accordingly, we say that $ y \in x^{-} $ if $ \| x + \lambda y \| \geq \| x \| $ for all $ \lambda \leq 0$.
\end{definition}
\begin{definition}[\cite{SPM}]
Let $ \mathbb{X} $ be a normed  space and let $ x, y \in \mathbb{X}. $ For $ \epsilon \in [0, 1), $ we say that $ y \in x^{+(\epsilon)} $if $ \| x+\lambda y \|\geq \sqrt[]{1-\epsilon^2}~\|x\|$ for all $\lambda \geq 0. $ Similarly, we say that $ y \in x^{-(\epsilon)} $if $ \| x+\lambda y \|\geq \sqrt[]{1-\epsilon^2}~\|x\|$ for all $\lambda \leq 0. $ 
\end{definition}
 For $ T, A \in \mathbb{K}(\mathbb{X}, \mathbb{Y}) , $ we  characterize $ T \bot^{\epsilon}_D A$ in terms of the norm attainment set $M_T$ when the space $\mathbb{X}$ is a reflexive Banach space. We also provide an alternative proof of Theorem 2.2 of \cite{S} which states that  in a finite dimensional real normed space, $T \perp_B A$ if and only if there exists $x, y \in M_T$ such that $Ax \in (Tx)^+$ and $Ay \in (Ty)^-$. We characterize approximate Birkhoff-James orthogonality $ T \bot^{\epsilon}_B A$ of a bounded linear operator $T$ acting on a Hilbert space of any dimension. This improves on the Theorem 3.4 of \cite{CSW} in which the characterization of approximate Birkhoff-James orthogonality of compact linear operator was obtained. We also provide  characterization of   approximate Birkhoff-James orthogonality $ T \bot^{\epsilon}_B A$ in both the cases of $T,A $ being compact linear operators on a reflexive Banach space and $T,A$ being bounded linear operators on a normed space of any dimension.

\section{ Approximate Birkhoff-James orthogonality  $(\perp_D^{\epsilon})$  of bounded linear operators}

We begin with a complete characterization of approximate Birkhoff-James orthogonality $(\perp_D^{\epsilon})$ of compact linear operators defined on a reflexive Banach space.

\begin{theorem}\label{theorem:operator}
Let $\mathbb{X}$ be a reflexive Banach space and $\mathbb{Y}$ be any normed space. Let $T,A \in \mathbb{K(\mathbb{X}, \mathbb{Y})}$. Then for $0 \leq \epsilon < 1$, $T \perp_D^{\epsilon} A$ if and only if either $(a)$ or $(b)$ holds.\\
$(a)$ There exists $x \in M_T$ such that $Ax\in (Tx)^+$ and for each $\lambda \in (-1-\sqrt{1-{\epsilon}^2}, -1 + \sqrt{1-{\epsilon}^2})\frac{\|T\|}{\|A\|}$ there exists $x_{\lambda} \in S_{\mathbb{X}}$ such that $\|Tx_{\lambda}+ \lambda Ax_{\lambda}\| \geq \sqrt{1-{\epsilon}^2}\|T\|$.\\
$(b)$ There exists $y \in M_T$ such that $Ay\in (Ty)^-$ and for each $\lambda \in (1-\sqrt{1-{\epsilon}^2}, 1 + \sqrt{1-{\epsilon}^2})\frac{\|T\|}{\|A\|}$ there exists $y_{\lambda} \in S_{\mathbb{X}}$ such that $\|Ty_{\lambda}+ \lambda Ay_{\lambda}\| \geq \sqrt{1-{\epsilon}^2}\|T\|$.
\end{theorem}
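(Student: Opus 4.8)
The plan is to reduce the operator orthogonality $T \perp_D^\epsilon A$ to a statement about the norm-attainment set $M_T$ by exploiting reflexivity and compactness, which together guarantee that $M_T$ is nonempty and that norms of compact operators are attained. First I would recall that $T \perp_D^\epsilon A$ means $\|T + \lambda A\| \ge \sqrt{1-\epsilon^2}\,\|T\|$ for all $\lambda \in \mathbb{R}$, and I would normalize so that $\|T\| = 1$ (using homogeneity). The key preliminary observation is that, since $\mathbb{X}$ is reflexive and $T + \lambda A$ is compact, for each $\lambda$ there is a unit vector $x_\lambda$ with $\|Tx_\lambda + \lambda A x_\lambda\| = \|T + \lambda A\|$; and similarly $M_T \ne \emptyset$. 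I would also want the elementary fact (provable by a weak-compactness / net argument using compactness of $T$) that if a sequence $\lambda_n \to 0$ and $x_n \in M_{T+\lambda_n A}$, then along a subnet $x_n \to x$ (in norm, using compactness of $T$) with $x \in M_T$, and moreover $Ax$ lies in $(Tx)^+$ or $(Tx)^-$ depending on the sign of $\lambda_n$.

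Next I would prove the ``only if'' direction. Assume $T \perp_D^\epsilon A$. Looking at small positive $\lambda$, pick $x_n \in M_{T + \lambda_n A}$ for $\lambda_n \downarrow 0$; passing to a subnet, $x_n \to x \in M_T$, and for $\mu > 0$ one gets $\|Tx + \mu A x\| \ge \lim \|Tx_n + \lambda_n A x_n\| \cdots$ — more carefully, one shows $\|Tx + \mu Ax\| \ge \|Tx\| = 1$ for all $\mu \ge 0$ by a standard convexity/limiting argument, i.e. $Ax \in (Tx)^+$. Then for each admissible $\lambda$ in the stated interval, the inequality $\|T + \lambda A\| \ge \sqrt{1-\epsilon^2}$ combined with the fact that $T + \lambda A$ attains its norm at some $x_\lambda \in S_{\mathbb{X}}$ gives exactly the second clause of $(a)$. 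The interval $(-1-\sqrt{1-\epsilon^2}, -1+\sqrt{1-\epsilon^2})\frac{\|T\|}{\|A\|}$ should arise as precisely the set of $\lambda$ for which having $x$ with $Ax \in (Tx)^+$ does \emph{not} already force $\|Tx + \lambda Ax\| \ge \sqrt{1-\epsilon^2}$; that is, when $\lambda$ is negative and not too large in modulus, the term $\|Tx + \lambda Ax\|$ can dip, so one needs the auxiliary vectors $x_\lambda$ to certify the bound there. Symmetrically, examining small negative $\lambda$ yields $y \in M_T$ with $Ay \in (Ty)^-$ and clause $(b)$. One of $(a)$, $(b)$ must hold depending on which side (small positive or small negative $\lambda$) is ``active''; I expect a case split on the sign behavior of the norm near $\lambda = 0$.

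For the ``if'' direction, suppose $(a)$ holds (the case $(b)$ is symmetric). Fix $\lambda \in \mathbb{R}$ and show $\|T + \lambda A\| \ge \sqrt{1-\epsilon^2}$. For $\lambda \ge 0$: using $x \in M_T$ with $Ax \in (Tx)^+$, we get $\|T + \lambda A\| \ge \|Tx + \lambda Ax\| \ge \|Tx\| = \|T\| \ge \sqrt{1-\epsilon^2}\|T\|$. For $\lambda$ negative with $|\lambda|$ large, i.e. $\lambda \le (-1-\sqrt{1-\epsilon^2})\frac{\|T\|}{\|A\|}$: here a direct estimate $\|Tx + \lambda Ax\| \ge |\lambda|\|Ax\| - \|Tx\|$... — actually one uses $\|Tx+\lambda Ax\|\geq \|Tx\|$ fails for negative $\lambda$, so instead I would use that for $\lambda \le (-1+\sqrt{1-\epsilon^2})\frac{\|T\|}{\|A\|}$ (hence outside the interval on the left), writing $Tx + \lambda Ax = (Tx + \mu Ax) + (\lambda - \mu)Ax$ with a suitable $\mu \ge 0$ chosen so that $\|Tx+\mu Ax\|\geq\|Tx\|$ and the triangle inequality gives the bound; the threshold $-1+\sqrt{1-\epsilon^2}$ is exactly where this becomes tight. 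For $\lambda$ in the stated open interval, the hypothesis directly supplies $x_\lambda$ with $\|Tx_\lambda + \lambda A x_\lambda\| \ge \sqrt{1-\epsilon^2}\|T\|$, so $\|T+\lambda A\| \ge \sqrt{1-\epsilon^2}\|T\|$. Combining the three ranges covers all $\lambda \in \mathbb{R}$, proving $T \perp_D^\epsilon A$.

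The main obstacle I anticipate is the limiting argument in the ``only if'' direction: extracting $x \in M_T$ from norm-attaining vectors $x_n$ of $T + \lambda_n A$ and showing the resulting $Ax$ lands in $(Tx)^+$ (or $(Tx)^-$), including verifying that the relevant sign is forced — this needs reflexivity to get weak convergence of $x_n$, compactness of $T$ to upgrade to norm convergence of $Tx_n$, and a careful one-sided inequality manipulation to pin down $(Tx)^+$ versus $(Tx)^-$. The bookkeeping of which $\lambda$-range needs the auxiliary vectors and how the endpoints $\pm 1 \pm \sqrt{1-\epsilon^2}$ emerge from the triangle-inequality estimates is the other delicate point, though it is essentially routine once the geometric picture (the function $\lambda \mapsto \|T+\lambda A\|$ being convex with value $\|T\|$ at $0$) is in hand.
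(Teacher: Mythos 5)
Your sufficiency argument has the right skeleton but the negative-$\lambda$ bookkeeping is wrong at the two places where the theorem's interval endpoints actually matter. The trivial bound handles only the range between the interval and $0$: for $(-1+\sqrt{1-\epsilon^2})\frac{\|T\|}{\|A\|}\leq\lambda<0$ one has $\|T+\lambda A\|\geq\|T\|-|\lambda|\|A\|\geq\sqrt{1-\epsilon^2}\|T\|$ (your inequality ``$\lambda\leq(-1+\sqrt{1-\epsilon^2})\frac{\|T\|}{\|A\|}$'' points the wrong way, and your $\mu$-decomposition at the fixed vector $x$ reduces to exactly this trivial bound). For the far-left range $\lambda\leq(-1-\sqrt{1-\epsilon^2})\frac{\|T\|}{\|A\|}$ no estimate at the single vector $x$ can work, because $\|Ax\|$ may be much smaller than $\|A\|$ (even $Ax=0$ is allowed when $Ax\in(Tx)^+$), so $|\lambda|\|Ax\|-\|Tx\|$ need not reach $\sqrt{1-\epsilon^2}\|T\|$; you must use the operator-level reverse triangle inequality $\|T+\lambda A\|\geq|\lambda|\|A\|-\|T\|\geq(1+\sqrt{1-\epsilon^2})\|T\|-\|T\|$, which is what the paper does. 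As written, your ``three ranges'' do not cover $\mathbb{R}$ correctly and the far-left case is unproved.

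In the necessity direction your central limiting claim is false under mere approximate orthogonality: from $\lambda_n\downarrow 0$, $x_n\in M_{T+\lambda_n A}$, $x_n\to x$, you cannot conclude $Ax\in(Tx)^+$. The standard convexity argument needs $\|T+\lambda_n A\|\geq\|T\|\geq\|Tx_n\|$, i.e.\ exact orthogonality on that side, whereas $T\perp_D^{\epsilon}A$ only gives $\|T+\lambda_n A\|\geq\sqrt{1-\epsilon^2}\|T\|$. Concretely, on $\ell_2^2$ take $T=I$, $A=\mathrm{diag}(-\tfrac12,-\tfrac14)$ and $\epsilon$ with $\sqrt{1-\epsilon^2}\leq\tfrac13$: then $T\perp_D^{\epsilon}A$, for small $\lambda>0$ the operator $T+\lambda A$ attains its norm only at $\pm e_2$, and $Ae_2\notin(Te_2)^+$ (indeed no unit vector $x$ has $Ax\in(Tx)^+$ here, so your ``small positive $\lambda$'' branch can never produce the vector you claim). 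Moreover the whole limiting machinery is unnecessary: since $\mathbb{X}$ is reflexive and $T$ compact, $M_T\neq\emptyset$, and for any $x\in M_T$ the convexity of $\lambda\mapsto\|Tx+\lambda Ax\|$ forces $Ax\in(Tx)^+$ or $Ax\in(Tx)^-$; the second clauses of $(a)$/$(b)$ then follow at once because $T+\lambda A$ is compact, hence attains its norm at some $x_\lambda$, and $\|T+\lambda A\|\geq\sqrt{1-\epsilon^2}\|T\|$ by hypothesis (this trivial dichotomy is exactly how the paper argues). You did state this last point correctly, but the route you chose to produce $x$ with $Ax\in(Tx)^+$ is both broken and superfluous, and your ``case split on the sign behavior near $\lambda=0$'' is left unspecified, so the necessity proof as proposed does not go through.
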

\begin{proof}
We first prove the necessary condition. Suppose that $T \perp_D^{\epsilon} A$. Since $\mathbb{X}$ is a reflexive Banach space and $T \in \mathbb{K(\mathbb{X}, \mathbb{Y})}$, $M_T \neq \emptyset$. Suppose there exists $x \in M_T$ such that $Ax \in (Tx)^+$. Now since $T,A$ are compact linear operators, $T + \lambda A$ is also a compact linear operator for each $\lambda \in \mathbb{R}$. Therefore, for each $\lambda \in (-1-\sqrt{1-{\epsilon}^2}, -1 + \sqrt{1-{\epsilon}^2})\frac{\|T\|}{\|A\|}$, $(T + \lambda A)$ attains norm at some $x_{\lambda} \in S_{\mathbb{X}}$. Then $\|(T+ \lambda A)x_{\lambda}\|= \|T + \lambda A\| \geq \sqrt{1-{\epsilon}^2}\|T\|$. Thus $(a)$ holds. Similarly suppose there exists $y \in M_T$ such that $Ay \in (Ty)^-$. Now for each $\lambda \in (1-\sqrt{1-{\epsilon}^2}, 1 + \sqrt{1-{\epsilon}^2})\frac{\|T\|}{\|A\|}$, suppose $(T + \lambda A)$ attains norm at $y_{\lambda} \in S_{\mathbb{X}}$. Then $\|(T+ \lambda A)y_{\lambda}\|= \|T + \lambda A\| \geq \sqrt{1-{\epsilon}^2}\|T\|$. Thus $(b)$ holds. This completes the proof of the necessary part.\\
Now we prove the sufficient part. Suppose that $(a)$ holds. Then for all $\lambda \geq 0$, $\|T + \lambda A\| \geq \|(T+ \lambda A)x\|\geq \|Tx\| \geq \sqrt{1-{\epsilon}^2}\|T\|$. For all $\lambda \in (-1-\sqrt{1-{\epsilon}^2}, -1 + \sqrt{1-{\epsilon}^2})\frac{\|T\|}{\|A\|}$, $\|T + \lambda A\|\geq \|(T + \lambda A)x_{\lambda}\|\geq \sqrt{1-{\epsilon}^2}\|T\|$. Now if $(-1 + \sqrt{1-{\epsilon}^2})\frac{\|T\|}{\|A\|} \leq \lambda < 0$, then $\|T+ \lambda A\| \geq \|T\|- |\lambda|\|A\|= \|T\|+ \lambda \|A\| \geq \|T\|- \|T\| + \sqrt{1-{\epsilon}^2}\|T\|= \sqrt{1-{\epsilon}^2}\|T\|$. If $\lambda \leq (-1-\sqrt{1-{\epsilon}^2})\frac{\|T\|}{\|A\|}$, then $\|T + \lambda A\| \geq |\lambda|\|A\|- \|T\| \geq (1+\sqrt{1-{\epsilon}^2})\|T\|- \|T\|= \sqrt{1-{\epsilon}^2}\|T\|$. Therefore, $T \perp_D^{\epsilon}A$. Similarly if $(b)$ holds it can be shown that $T \perp_D^{\epsilon}A$.
\end{proof}
Using the last result we now give another proof of characterization of Birkhoff-James orthogonality [Theorem 2.2, \cite{D}] of bounded linear operators in terms of the norm attainment set.

\begin{theorem}\label{theorem:ortho}
Let $\mathbb{X}$ be a  finite dimensional Banach space. Let $T,A \in \mathbb{B}(\mathbb{X})$. Then $T \perp_B A$ if and only if there exists $x, y \in M_T$ such that $Ax \in (Tx)^+$ and $Ay \in (Ty)^-$. 
\end{theorem}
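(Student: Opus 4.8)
The plan is to obtain this as the $\epsilon=0$ instance of Theorem~\ref{theorem:operator}, together with a compactness argument on the (compact) unit sphere $S_{\mathbb{X}}$. Note first that a finite dimensional Banach space is reflexive and every operator on it is compact, so Theorem~\ref{theorem:operator} applies to arbitrary $T,A\in\mathbb{B}(\mathbb{X})$; note also that $\perp_D^{0}$ coincides with $\perp_B$, since $\sqrt{1-0^{2}}=1$. If $T=0$ or $A=0$ the assertion is trivial (any $x=y\in M_T$ works, and $M_T\neq\emptyset$ because $S_{\mathbb{X}}$ is compact), so assume $T,A\neq 0$. The sufficiency needs no dimension hypothesis: if $x,y\in M_T$ with $Ax\in(Tx)^{+}$ and $Ay\in(Ty)^{-}$, then $\|T+\lambda A\|\geq\|Tx+\lambda Ax\|\geq\|Tx\|=\|T\|$ for $\lambda\geq 0$, and the same estimate with $y$ gives $\|T+\lambda A\|\geq\|T\|$ for $\lambda\leq 0$; hence $T\perp_B A$.

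For the necessity, suppose $T\perp_B A$, i.e.\ $T\perp_D^{0}A$, and apply Theorem~\ref{theorem:operator} with $\epsilon=0$: condition $(a)$ or condition $(b)$ holds. Replacing $A$ by $-A$ interchanges $(Tx)^{+}$ with $(Tx)^{-}$ and swaps $(a)$ with $(b)$ while leaving $T\perp_B A$ intact, so it is enough to run the argument assuming $(a)$ holds and then translate back. In case $(a)$ there is already $x\in M_T$ with $Ax\in(Tx)^{+}$, so I only need to produce $y\in M_T$ with $Ay\in(Ty)^{-}$. For all $n$ large enough that $-1/n$ lies in the interval $(-2,0)\tfrac{\|T\|}{\|A\|}$ of condition $(a)$, that condition supplies $x_n\in S_{\mathbb{X}}$ with $\|Tx_n-\tfrac1n Ax_n\|\geq\|T\|$. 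By compactness, pass to a subsequence with $x_n\to y\in S_{\mathbb{X}}$; from $\|Tx_n\|\geq\|Tx_n-\tfrac1n Ax_n\|-\tfrac1n\|A\|\geq\|T\|-\tfrac1n\|A\|$ and continuity of $T$ we get $\|Ty\|=\|T\|$, so $y\in M_T$.

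It remains to check that $Ay\in(Ty)^{-}$, i.e.\ $\|Ty+\mu Ay\|\geq\|Ty\|$ for every $\mu\leq 0$. The case $\mu=0$ is trivial, so fix $\mu<0$. Here I would exploit the convexity of $g_n(\lambda):=\|Tx_n+\lambda Ax_n\|$: as soon as $n$ is large enough that $\mu<-1/n$, the inequality $g_n(-1/n)\geq\|T\|\geq\|Tx_n\|=g_n(0)$ forces the chord of $g_n$ over $[-1/n,0]$ to have nonpositive slope, whence monotonicity of the difference quotients of a convex function yields $g_n(\mu)\geq g_n(-1/n)\geq\|T\|$. Letting $n\to\infty$ and using $x_n\to y$ gives $\|Ty+\mu Ay\|\geq\|T\|=\|Ty\|$, as required, completing case $(a)$. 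Case $(b)$ is handled the same way after replacing $A$ by $-A$: Theorem~\ref{theorem:operator} then supplies $y\in M_T$ with $Ay\in(Ty)^{-}$ outright, and the same compactness-plus-convexity argument, now run with $\lambda=1/n\to 0^{+}$, produces the missing $x\in M_T$ with $Ax\in(Tx)^{+}$.

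I expect this last step to be the main obstacle. Theorem~\ref{theorem:operator}$(a)$ hands over the ``$(Tx)^{+}$'' vector directly, but on the ``$(Tx)^{-}$'' side it provides only scattered approximate-norm-attainment data, and simply re-applying Theorem~\ref{theorem:operator} to $-A$ merely reproduces the same dichotomy and yields nothing new. The feature that rescues the argument is the convexity of $\lambda\mapsto\|Tx_n+\lambda Ax_n\|$, which propagates a lower bound known at a single negative value of $\lambda$ to all more negative values; coupled with the compactness of $S_{\mathbb{X}}$, this converts the approximating sequence $(x_n)$ into an honest norm-attaining vector $y$ carrying the global property $Ay\in(Ty)^{-}$.
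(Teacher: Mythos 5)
Your proposal is correct and follows essentially the same route as the paper: specialize Theorem~\ref{theorem:operator} to $\epsilon=0$, take $\lambda_n=-1/n$ in the interval condition, extract a convergent subsequence on the compact unit sphere to get $y\in M_T$, and propagate the bound to all $\lambda<0$ by convexity of $\lambda\mapsto\|Tx_n+\lambda Ax_n\|$ before passing to the limit. Your monotone-difference-quotient step is just a rephrasing of the paper's convex-combination contradiction, and your $A\mapsto -A$ symmetry makes explicit the paper's ``without loss of generality.''
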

\begin{proof} The sufficient part of the proof is obvious. We prove the necessary part. 
Being finite dimensional Banach space, the space $\mathbb{X}$ is reflexive and every bounded linear operator defined on $\mathbb{X}$ is a compact linear operator. So we can apply Theorem \ref{theorem:operator}. Without loss of generality we assume that $(a)$ holds. Then there exists $x \in M_T$ such that $Ax \in (Tx)^+$ and for each $\lambda_n = - \frac{1}{n}$ there exists $x_n \in S_{\mathbb{X}}$ such that $\|(T + \lambda_n A)x_n\| \geq \|T\| ,$ i.e, $\|Tx_n - \frac{1}{n} Ax_n\| \geq \|T\|$. Now, since $\mathbb{X}$ is finite dimensional, $\{x_n\}$ has a convergent subsequence say, $\{x_{n_k}\}$ converging to $y$. Therefore, as $n_k \longrightarrow \infty$, we have, $\|Ty\| \geq \|T\| \geq \|Ty\|$. This gives that $y \in M_T$. Now we show that $Ay \in (Ty)^-$. Let $\lambda < - \frac{1}{n} < 0$. Then we claim that $\|Tx_n + \lambda Ax_n\| \geq \|T\|$. Otherwise, $Tx_n - \frac{1}{n} Ax_n = (1-t)(Tx_n + \lambda Ax_n) + t Tx_n$ for some $0 < t < 1$. This implies that $\|Tx_n - \frac{1}{n} Ax_n\| < (1-t)\|T\| + t \|T\|= \|T\|$, a contradiction. This proves our claim. Now for any $\lambda < 0$, there exists $n_0 \in \mathbb{N}$ such that $\lambda < - \frac{1}{n_0} < 0$. Therefore, for all $n \geq n_0$, we have, $\|Tx_n + \lambda Ax_n\| \geq \|T\|$. Letting $n \longrightarrow \infty$, we have, $\|Ty + \lambda Ay\| \geq \|T\|= \|Ty\| $. Hence $Ay \in (Ty)^-$. This completes the proof of the theorem.     
\end{proof}

\section{ Approximate Birkhoff-James orthogonality  $(\perp_B^{\epsilon})$  of bounded linear operators}
In this section we begin with   a complete characterization of  approximate Birkhoff-James orthogonality  $(\perp_B^{\epsilon})$  in the space of bounded linear operators defined on a Hilbert space $\mathbb{H}.$ 

\begin{theorem}\label{theorem:hilbert}
(i) Let $T \in \mathbb{B}(\mathbb{H}).$ Then for any $A\in \mathbb{B}(\mathbb{H})$, $T\bot_B^{\epsilon} A \Leftrightarrow |\langle Tx,Ax\rangle| \leq \epsilon \|T\|\|A\|$ for some $x \in M_T$  if and only if $M_T= S_{H_0}$ for some finite dimensional subspace $H_0$ of $\mathbb{H}$ and $\|T\|_{H_0^{\bot}} < \|T\|$. \\
(ii) Moreover, if $M_T \subseteq M_A$ then $T\bot_B^{\epsilon} A \Leftrightarrow Tx \bot^{\epsilon} Ax$ for some $x \in M_T$  if and only if $M_T= S_{H_0}$ for some finite dimensional subspace $H_0$ of $\mathbb{H}$ and $\|T\|_{H_0^{\bot}} < \|T\|$. 
\end{theorem}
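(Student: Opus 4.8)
The plan is to prove part (i) in full and then read off part (ii). I would begin with a structural remark that clarifies the statement: with $M:=\|T\|^{2}$, the operator $T^{*}T$ is positive with $\|T^{*}T\|=M$, so $\langle (MI-T^{*}T)x,x\rangle=0$ forces $(MI-T^{*}T)^{1/2}x=0$, hence $T^{*}Tx=Mx$; consequently $M_{T}=S_{\mathbb H}\cap\ker(MI-T^{*}T)=S_{H_{0}}$, where $H_{0}:=\ker(MI-T^{*}T)$ is the top eigenspace of $T^{*}T$. Thus $M_{T}$ is \emph{always} the unit sphere of a closed subspace, $H_{0}^{\perp}$ is $T^{*}T$-invariant, $\|T\|_{H_{0}^{\perp}}^{2}=\|(T^{*}T)|_{H_{0}^{\perp}}\|$, and the only content of the structural condition is that $\dim H_{0}<\infty$ together with the gap $\|T\|_{H_{0}^{\perp}}<\|T\|$. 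I would also record the easy half of the inner biconditional of (i), which needs no hypothesis on $T$: if $x\in M_{T}$ and $|\langle Tx,Ax\rangle|\le\epsilon\|T\|\|A\|$, then for all real $\lambda$, $\|T+\lambda A\|^{2}\ge\|(T+\lambda A)x\|^{2}=\|T\|^{2}+2\lambda\langle Tx,Ax\rangle+\lambda^{2}\|Ax\|^{2}\ge\|T\|^{2}-2\epsilon\|T\|\|\lambda A\|$, i.e. $T\perp_{B}^{\epsilon}A$. All the work lies in the remaining implications.

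\emph{Structural condition $\Rightarrow$ the inner biconditional for every $A$.} Under the structural hypothesis the norm-attainment form of the Bhatia--\v{S}emrl theorem for bounded operators on a Hilbert space (see \cite{SPH}) applies: for every $B$, $T\perp_{B}B$ iff $\langle Tx,Bx\rangle=0$ for some $x\in M_{T}$. (If a self-contained proof of the implication actually used is wanted, take $x_{n}\in S_{\mathbb H}$ with $\|Tx_{n}\|\to\|T\|$ and $\langle Tx_{n},Bx_{n}\rangle\to0$, write $x_{n}=u_{n}+v_{n}\in H_{0}\oplus H_{0}^{\perp}$, use $\langle Tu_{n},Tv_{n}\rangle=\langle T^{*}Tu_{n},v_{n}\rangle=M\langle u_{n},v_{n}\rangle=0$ to obtain $\|Tx_{n}\|^{2}=\|Tu_{n}\|^{2}+\|Tv_{n}\|^{2}\le M\|u_{n}\|^{2}+\|T\|_{H_{0}^{\perp}}^{2}\|v_{n}\|^{2}$; the gap forces $\|v_{n}\|\to0$, and $\dim H_{0}<\infty$ then yields a subsequential limit $x\in S_{H_{0}}=M_{T}$ with $\langle Tx,Bx\rangle=0$.) Now if $T\perp_{B}^{\epsilon}A$, the characterization recalled in the introduction (Theorem 2.3 of \cite{CSW}), applied in the normed space $\mathbb B(\mathbb H)$, provides $B\in\mathrm{Lin}\{T,A\}$ with $T\perp_{B}B$ and $\|B-A\|\le\epsilon\|A\|$; picking $x\in M_{T}$ with $\langle Tx,Bx\rangle=0$ gives $|\langle Tx,Ax\rangle|=|\langle Tx,(A-B)x\rangle|\le\|Tx\|\,\|(A-B)x\|\le\|T\|\,\|A-B\|\le\epsilon\|T\|\|A\|$, which is the reverse arrow.

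\emph{The inner biconditional for every $A$ $\Rightarrow$ structural condition, by contraposition.} Assume the structural condition fails. If $H_{0}=\{0\}$ then $M_{T}=\emptyset$ and $A=0$ already breaks the biconditional. Otherwise either (a) $\dim H_{0}=\infty$, or (b) $\dim H_{0}<\infty$ but $\|T\|_{H_{0}^{\perp}}=\|T\|$ (now $H_{0}^{\perp}$ is infinite dimensional); in each case I would construct $A$ with $T\perp_{B}^{\epsilon}A$ yet $|\langle Tx,Ax\rangle|>\epsilon\|T\|\|A\|$ for \emph{every} $x\in M_{T}$, destroying the forward arrow. The model instance of (a), $T=I$ on $\ell^{2}$, shows the mechanism: let $A=\bigoplus_{n}R_{n}$ be the orthogonal operator built from planar rotations $R_{n}$ by angles $\theta_{n}$ with $\cos\theta_{n}=\epsilon+(1-\epsilon)/n$; then $\|A\|=1$, $\langle x,Ax\rangle=\sum_{n}(\cos\theta_{n})\|P_{n}x\|^{2}\in(\epsilon,1]$ for every unit $x$ ($P_{n}$ the projection onto the $n$th $2$-dimensional summand; the infimum $\epsilon$ is not attained), while for $\mu\le0$, $\|I+\mu A\|^{2}=1+\mu^{2}+2\mu\inf_{x}\langle x,Ax\rangle=1+\mu^{2}+2\mu\epsilon\ge1-2\epsilon|\mu|$, so $I\perp_{B}^{\epsilon}A$. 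For general $T$ one transplants this onto $H_{0}$ (and, in case (b), onto $H_{0}^{\perp}$): $A$ should act on the norming part like $\epsilon\|A\|/\|T\|$ times a partial isometry returning $TH_{0}$ into itself, plus a compression realizing the defect (infinite dimension, or the missing gap), arranged so that $\inf_{x\in M_{T}}\langle Tx,Ax\rangle=\epsilon\|T\|\|A\|$ is approached but not attained; checking $T\perp_{B}^{\epsilon}A$ reduces once more, via $\langle Tu,Tv\rangle=0$ for $u\in H_{0}$, $v\in H_{0}^{\perp}$, to ``$\mu^{2}\ge0$''. Producing this $A$ cleanly in case (b), where the critical directions are only asymptotically norming, is the step I expect to be the main obstacle; one workable route is to begin with an operator $B_{0}$ supplied by the failure of the attainment form of Bhatia--\v{S}emrl (so $T\perp_{B}B_{0}$ while $\langle Tx,B_{0}x\rangle\ne0$ on all of $M_{T}$, cf. \cite{SPH}) and add a small multiple of $T$ to push the inner products strictly past $\epsilon\|T\|\|A\|$ on $M_{T}$, verifying that approximate orthogonality survives the perturbation.

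\emph{Part (ii).} If $M_{T}\subseteq M_{A}$ then $\|Tx\|=\|T\|$ and $\|Ax\|=\|A\|$ for $x\in M_{T}$, so ``$|\langle Tx,Ax\rangle|\le\epsilon\|T\|\|A\|$'' is literally ``$Tx\perp^{\epsilon}Ax$''; hence (ii) is the restriction of (i) to operators $A$ with $M_{T}\subseteq M_{A}$. The implication from the structural condition is then immediate from (i), and the contrapositive argument above carries over once one observes that the witness $A$ constructed there can be taken norm-attaining on all of $M_{T}$ (in the model case $A$ is an isometry, so $M_{A}=S_{\mathbb H}\supseteq M_{T}$; in general the partial isometry on $H_{0}$ is chosen so that $M_{T}\subseteq M_{A}$), hence it lies in the admissible class.
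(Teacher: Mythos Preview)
Your overall architecture matches the paper: the easy half of the inner biconditional, the use of Theorem~2.3 of \cite{CSW} together with a Bhatia--\v{S}emrl type statement for the sufficient direction (the paper cites \cite{PSG} rather than \cite{SPH}, but the content is the same, and your self-contained subsequence argument is fine), and contraposition via an explicit witness $A$ for the necessary direction. Part~(ii) is handled identically. The genuine gap is that you do not actually carry out the contrapositive constructions for general $T$; you give only the model case $T=I$ in (a) and openly flag (b) as the ``main obstacle''.

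The paper's constructions are considerably simpler than the ones you are reaching for, and they work for arbitrary $T$ without any transplant. In case~(a), instead of planar rotations, pick an orthonormal sequence $\{e_n\}$ inside $H_0$, extend to an orthonormal basis, note (as you already did) that the $Te_\alpha$ with $e_\alpha\in H_0$ are orthonormal, and set $Ae_n=c_nTe_n$ with $c_n\searrow\epsilon$, $Ae_\alpha=Te_\alpha$ on the rest of $H_0$, and $A=0$ on $H_0^{\perp}$; then $\|A\|=1$, $\|(T+\lambda A)e_n\|^2=(1+\lambda c_n)^2\to(1+\lambda\epsilon)^2\ge 1-2\epsilon|\lambda|$ gives $T\perp_B^{\epsilon}A$, while for every $x\in M_T$ one has $\langle Tx,Ax\rangle=\sum_n c_n|\langle x,e_n\rangle|^2+\sum_{\alpha\notin\mathbb N}|\langle x,e_\alpha\rangle|^2>\epsilon$. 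In case~(b) the witness is even cleaner: take $A=T P_{H_0}$, i.e.\ $A(x+y)=Tx$ for $x\in H_0$, $y\in H_0^{\perp}$. Along a sequence $e_n\in S_{H_0^{\perp}}$ with $\|Te_n\|\to\|T\|$ one has $(T+\lambda A)e_n=Te_n$, so in fact $T\perp_B A$; yet for every $x\in M_T=S_{H_0}$ one gets $\langle Tx,Ax\rangle=\|Tx\|^2=1>\epsilon$. No perturbation of a hypothetical $B_0$ is needed. Your rotation idea is not wrong, but it buys nothing over these diagonal/projection constructions and makes the general case harder than it is.
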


\begin{proof}  Without loss of generality we assume that $\|T\| = 1.$ \\
(i) We first prove the necessary part. From Theorem 2.2 of Sain and Paul \cite{SP} it follows that in case of a Hilbert space the norm attaining set $M_T$ is always a unit sphere of some subspace of the space. We  first show that the subspace is finite dimensional. Suppose $M_T$ be the unit sphere of an infinite dimensional subspace $H_0.$ Then we can find a set $\{e_n : n \in \mathbb{N} \}$ of orthonormal   vectors in $H_0.$  Extend the set to a complete orthonormal basis $\mathcal{B} = \{ e_{\alpha} : \alpha \in \Lambda \supset \mathbb{N} \} $ of $\mathbb{H}.$
For each $e_\alpha \in H_0 \cap \mathcal{B} $ we have 
$$ \| T^{*}T \| = \|T\|^2 = \|Te_\alpha \|^2 = \langle T^{*}T e_\alpha , e_\alpha \rangle \leq \|T^{*}Te_\alpha \| \|e_\alpha \| \leq \|T^{*}T\| $$
so that by the equality condition of Schwarz's inequality we get $ T^{*}Te_\alpha = \lambda_\alpha e_\alpha $ for some scalar $\lambda_\alpha.$ 
Thus $\{ Te_\alpha : e_\alpha \in H_0 \cap \mathcal{B}\}$ is a set of orthonormal vectors in $\mathbb{H}.$ For $ \epsilon \in [0,1) $ there exists $ c_n $ such that $ \epsilon < c_n < 1 $ and $ c_n \longrightarrow \epsilon.$
 Define $A : \mathcal{B} \longrightarrow \mathbb{H} $ as follows : 
\begin{eqnarray*}
           A(e_n) & =   & c_n Te_n, n \in \mathbb{N} \\
         	 A(e_\alpha) & = & Te_\alpha,~ e_\alpha \in  H_0 \cap \mathcal{B} - \{e_n : n \in \mathbb{N} \}\\
					 A(e_\alpha) & = & 0,~e_\alpha \in \mathcal{B} - H_0 \cap \mathcal{B}
\end{eqnarray*}	

As $\{ Te_\alpha : e_\alpha \in H_0 \cap \mathcal{B}\}$ is a set of orthonormal vectors in $\mathbb{H}$ it is easy to see that  $A$ can be extended as  a bounded linear operator on $\mathbb{H}$ and $ \|A\| = 1.$ 

Now for  any scalar $\lambda, ~ \| T + \lambda A \|^2 \geq \| (T + \lambda A)e_n \|^2 = \| ( 1 + \lambda c_n)Te_n \|^2 = 1 + \lambda^2 {c_n}^2 + 2 \lambda c_n \longrightarrow 1 + \lambda^2 \epsilon^2 + 2 \lambda \epsilon.$ It is easy to see that $ \| T + \lambda A\|^2 \geq \|T\|^2 - 2 \epsilon \|T\| \|\lambda A\|~ \forall \lambda$ and hence $ T \bot_{B}^\epsilon A.$  

We next show that there exists no $ x \in M_T$ such that $ \mid \langle Tx, Ax \rangle \mid  \leq  \epsilon \|T\|\|A\| .$
Let $ x = \sum_{\alpha} \langle x, e_\alpha \rangle e_\alpha~ \in M_T.$  Then 
$$ \langle Tx, Ax \rangle = \sum_n c_n \mid \langle x, e_n \rangle\mid ^2	\|Te_n\|^2+ \sum_{\alpha \notin \mathbb{N}} \mid \langle x, e_\alpha \rangle\mid ^2 	\|Te_\alpha\|^2 > \epsilon \|x\|^2$$
and so $ \mid \langle Tx, Ax \rangle \mid > \epsilon \|T\|\|A\| $ for each $x \in M_T.$
Thus $T \bot_B^\epsilon A$ but there exists no $x \in M_T$ such that $ \mid \langle Tx, Ax \rangle \mid \leq \epsilon \|T\|\|A\| .$ This is a contradiction and so $H_0$ must be finite dimensional.

Next we claim that $\|T\|_{{H_0}^\bot} < \|T\|$. Suppose that $\|T\|_{{H_0}^\bot} = \|T\|$. Since $T$ does not attain norm on $H_0^\bot$, there exists a sequence $ \{e_n \} $ in $H_0^\bot$ such that $\|Te_n\| \rightarrow \|T\|$. We have $ \mathbb{H} = H_0 \oplus {H_0}^{\bot} $.

Define $A : \mathbb{H} \longrightarrow \mathbb{H} $ as follows:
\[ Az = Tx, \mbox{ where } z = x + y, x \in H_0, y \in {H_0}^{\bot} \]
 Then it is easy to check $A$ is bounded on $\mathbb{H}$ and $\|A\|=1$. Also for any scalar $ \lambda $, $ \| T + \lambda A \|^2 \geq \| (T + \lambda A) e_n \|^2  = \| Te_n \|^2 $ holds for each $ n \in \mathbb{N}.$ Thus $ \| T + \lambda A \|^2 \geq \|T\|^2\geq \|T\|^2- 2 \epsilon \|T\|\|\lambda A\| $ for all $ \lambda $. Hence $ T \bot_B^{\epsilon} A.$ 
But for all $ x\in M_T$, $|\langle Tx, Ax\rangle |\geq \langle Tx, Ax\rangle = \langle Tx, Tx \rangle =\|Tx\|^2= 1 > \epsilon \|T\|\|A\| .$ This contradiction completes the proof of the  necessary part of the theorem.\\

  We next prove the sufficient part. Let $T\bot_B^{\epsilon} A$. Then by Theorem 2.3 of \cite{CSW} there exists $S\in \mathbb{B}({\mathbb{H}})$ such that $T\bot_B S$ and $\|A-S\|\leq \epsilon \|A\|$. Since $M_T= S_{H_0},$ where $H_0$ is a finite dimensional subspace of $\mathbb{H}$ and $ \| T\|_{H_0^{\bot}} < \|T\|, $ so by Theorem 3.1 of \cite{PSG} there exists $x \in M_T$ such that $Tx \bot_B Sx$. Clearly $\|Sx-Ax\|\leq \|S-A\| \leq \epsilon \|A\|$. Now $\langle Tx, Ax \rangle = \langle Tx, Sx \rangle + \langle Tx, Ax- Sx \rangle \Rightarrow |\langle Tx, Ax \rangle | \leq |\langle Tx, Sx \rangle | + |\langle Tx, Ax- Sx \rangle | \leq \|Tx\|\|Ax-Sx\| \leq \epsilon \|T\|\|A\|$.\\
Conversely, let $|\langle Tx, Ax \rangle| \leq \epsilon \|T\|\|A\|$ for some $x\in M_T$. Then for any $\lambda \in \mathbb{R}$, $\|T+\lambda A\|^2 \geq \|(T+\lambda A)x\|^2= \|Tx\|^2+ 2\lambda \langle Tx, Ax\rangle + |\lambda|^2 \|Ax\|^2 \geq \|Tx\|^2 +2 \lambda \langle Tx, Ax \rangle \geq \|Tx\|^2 -2 |\lambda| |\langle Tx, Ax \rangle| \geq \|T\|^2 -2\epsilon |\lambda| \|T\|\|A\|$. Hence $T \bot_B^{\epsilon} A$.
This completes the proof of (i). \\
(ii) Since $M_T \subseteq M_A$ so $x \in M_T$ implies $ \|Tx\| = \|T\|, \|Ax\| = \|A\|$ and hence (ii) follows from (i).    
\end{proof}
The above theorem improves on both Theorem 3.3 [Part (3)] and Theorem 3.4 of  Chmieli\'nski et al. \cite{CSW}.   
If the Hilbert space $\mathbb{H}$ is finite dimensional  then we have the following corollary, the proof of which is obvious.
\begin{cor}[Theorem 3.2 \cite{CSW}]\label{cor:finite}
 Let $\mathbb{H}$ be a finite dimensional Hilbert space and  $T \in \mathbb{B}(\mathbb{H})$. Then for any $A\in \mathbb{B}(\mathbb{H})$, $T\bot_B^{\epsilon} A \Leftrightarrow |\langle Tx,Ax\rangle| \leq \epsilon \|T\|\|A\|$ for some $x \in M_T.$\\
Moreover, if $M_T \subseteq M_A$ then $T\bot_B^{\epsilon} A \Leftrightarrow Tx \bot^{\epsilon} Ax$ for some $x \in M_T.$ 
\end{cor}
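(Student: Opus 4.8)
The plan is to obtain the corollary as an immediate specialization of Theorem \ref{theorem:hilbert}, by observing that when $\mathbb{H}$ is finite dimensional the structural hypothesis occurring on the right-hand side of both equivalences there — namely that $M_T = S_{H_0}$ for some finite dimensional subspace $H_0$ of $\mathbb{H}$ with $\|T\|_{H_0^{\bot}} < \|T\|$ — holds automatically for every nonzero $T$. Once this is checked, the two chains of equivalences in Theorem \ref{theorem:hilbert}(i) and (ii) collapse to exactly the two assertions of the corollary, so nothing else is needed.

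First I would recall, as in the proof of Theorem \ref{theorem:hilbert}, that by Theorem 2.2 of Sain and Paul \cite{SP} the norm attainment set $M_T$ of any $T \in \mathbb{B}(\mathbb{H})$ is the unit sphere $S_{H_0}$ of a subspace $H_0$ of $\mathbb{H}$. Since $\mathbb{H}$ is finite dimensional, $H_0$ is trivially finite dimensional, so the first part of the hypothesis is free. For the second part I would argue (assuming, as there, that $\|T\| = 1$, the case $T = 0$ being trivial) by contradiction: if $\|T\|_{H_0^{\bot}} = \|T\|$, then since $\mathbb{H}$ is finite dimensional the unit sphere $S_{H_0^{\bot}}$ is compact, so the continuous function $z \mapsto \|Tz\|$ attains its supremum on $S_{H_0^{\bot}}$; hence there is $z \in S_{H_0^{\bot}}$ with $\|Tz\| = \|T\|$, i.e. $z \in M_T = S_{H_0}$. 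But then $z$ is a nonzero vector lying in both $H_0$ and $H_0^{\bot}$, contradicting $H_0 \cap H_0^{\bot} = \{0\}$. Therefore $\|T\|_{H_0^{\bot}} < \|T\|$.

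With the structural hypothesis of Theorem \ref{theorem:hilbert} now known to hold unconditionally, part (i) yields $T \bot_B^{\epsilon} A \Leftrightarrow |\langle Tx, Ax\rangle| \leq \epsilon \|T\|\|A\|$ for some $x \in M_T$, and part (ii) yields, under the extra assumption $M_T \subseteq M_A$, that $T \bot_B^{\epsilon} A \Leftrightarrow Tx \bot^{\epsilon} Ax$ for some $x \in M_T$ — precisely the corollary. There is essentially no obstacle here; the only point worth noting is the small compactness step ruling out that $T$ approaches its norm along $H_0^{\bot}$ without attaining it there, which is exactly where finite dimensionality of $\mathbb{H}$ enters (and, as the proof of Theorem \ref{theorem:hilbert} demonstrates, this is genuinely needed — in the infinite dimensional setting the structural hypothesis can fail).
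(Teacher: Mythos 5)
Your proposal is correct and matches the paper's intent: the paper states the corollary as an immediate consequence of Theorem \ref{theorem:hilbert} (calling the proof obvious), and your argument simply supplies the routine verification that in a finite dimensional Hilbert space the structural hypothesis $M_T = S_{H_0}$ with $H_0$ finite dimensional and $\|T\|_{H_0^{\bot}} < \|T\|$ holds automatically, via Theorem 2.2 of Sain and Paul and a standard compactness argument. No gaps; the handling of the trivial case $T=0$ and of $H_0^{\bot}=\{0\}$ is fine.
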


If $T$ is a compact linear operator on $\mathbb{H},$ not necessarily finite dimensional, then we have 
the following corollary. 
\begin{cor}[Theorem 3.4 \cite{CSW}]\label{cor:compact}
Let  $\mathbb{H}$ be a Hilbert space and $T \in \mathbb{K}(\mathbb{H}).$ If $A \in \mathbb{B}(\mathbb{H})$ and $M_T \subseteq M_A$  then $T\perp_B^{\epsilon} A$ if and only if there exists $x \in M_T$ such that $Tx\perp^{\epsilon} Ax$. 
\end{cor}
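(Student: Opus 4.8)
The plan is to derive this corollary from Theorem \ref{theorem:hilbert}(ii) by showing that the structural hypothesis appearing there — namely that $M_T = S_{H_0}$ for a finite dimensional subspace $H_0$ with $\|T\|_{H_0^\bot} < \|T\|$ — is automatically satisfied whenever $T$ is a nonzero compact operator on a Hilbert space. Once that is established, Theorem \ref{theorem:hilbert}(ii) immediately gives the equivalence of $T \perp_B^\epsilon A$ with ``$Tx \perp^\epsilon Ax$ for some $x \in M_T$'' under the hypothesis $M_T \subseteq M_A$, which is precisely the assertion of the corollary. (The trivial case $T = 0$ is handled separately, since then $M_T = S_\mathbb{H}$ and both sides hold vacuously; alternatively one assumes $T \neq 0$ throughout.)

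First I would recall, as in the proof of Theorem \ref{theorem:hilbert}, that by Theorem 2.2 of Sain and Paul \cite{SP} the set $M_T$ for an operator on a Hilbert space is always the unit sphere $S_{H_0}$ of some closed subspace $H_0 \subseteq \mathbb{H}$. So the only two things to check are: (1) $H_0$ is finite dimensional, and (2) $\|T\|_{H_0^\bot} < \|T\|$. For (1), if $H_0$ were infinite dimensional, pick an orthonormal sequence $\{e_n\}$ in $H_0$; then $\|Te_n\| = \|T\|$ for every $n$, while $e_n \rightharpoonup 0$ weakly, so compactness of $T$ forces $Te_n \to 0$ in norm, contradicting $\|Te_n\| = \|T\| \neq 0$. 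For (2), suppose $\|T\|_{H_0^\bot} = \|T\|$; since $T$ does not attain its norm on $S_{H_0^\bot}$ (as $M_T = S_{H_0}$ and $H_0 \cap H_0^\bot = \{0\}$), there is a sequence $\{u_n\}$ in $S_{H_0^\bot}$ with $\|Tu_n\| \to \|T\|$ but no norm-attaining limit; by reflexivity of $\mathbb{H}$ we may pass to a weakly convergent subsequence $u_n \rightharpoonup u$ with $u \in H_0^\bot$ (closed subspaces are weakly closed), and compactness gives $Tu_n \to Tu$ in norm, so $\|Tu\| = \|T\|$; but then $\|u\| \le 1$ forces $\|u\| = 1$, so $u \in M_T \cap H_0^\bot = S_{H_0} \cap H_0^\bot = \emptyset$, a contradiction. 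Hence $\|T\|_{H_0^\bot} < \|T\|$.

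With both structural conditions verified, the hypotheses of Theorem \ref{theorem:hilbert}(ii) are met, and since we are additionally assuming $M_T \subseteq M_A$, that theorem yields $T \perp_B^\epsilon A \Leftrightarrow Tx \perp^\epsilon Ax$ for some $x \in M_T$, completing the proof. I expect the main (though still routine) obstacle to be step (2): one must argue carefully that the failure of norm attainment on $H_0^\bot$ together with compactness and weak sequential compactness of the ball produces a genuine contradiction, being careful that the weak limit lands in $H_0^\bot$ and has norm exactly one. Everything else is a direct appeal to the already-proved Theorem \ref{theorem:hilbert} and the cited description of $M_T$ from \cite{SP}.
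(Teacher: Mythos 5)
Your proposal is correct and follows essentially the same route as the paper: both reduce the corollary to Theorem \ref{theorem:hilbert}(ii) by verifying that $M_T = S_{H_0}$ (via Theorem 2.2 of \cite{SP}) with $H_0$ finite dimensional and $\|T\|_{H_0^{\bot}} < \|T\|$, and then invoking the hypothesis $M_T \subseteq M_A$. The only minor difference is in how these two structural facts are checked: the paper observes that $H_0$ is an eigenspace of the compact operator $T^*T$ for the eigenvalue $\|T\|^2$ and cites norm attainment of compact operators on the reflexive space $H_0^{\bot}$, whereas you argue both points directly by weak convergence and compactness, which is equally valid.
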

\begin{proof}
Since $\mathbb{H}$ is a Hilbert space, we have from Theorem 2.2 of Sain and Paul \cite{SP}, $M_T= S_{H_0}$ for some subspace $H_0$ of $\mathbb{H}$. Now it is easy to observe that for each $x \in H_0 $, $T^*T x= \|T\|^2 x$. Since $T^*T$ is also a compact operator, so $H_0$ must be a finite dimensional subspace of $\mathbb{H}$. Since $H_0^{\bot}$ is reflexive and $T \in \mathbb{K}(\mathbb{H})$, clearly $\|T\|_{H_0^{\bot}} < \|T\|$. Hence by applying part (ii) of Theorem \ref{theorem:hilbert} we have, $T\perp_B^{\epsilon} A$ if and only if there exists $x \in M_T$ such that $Tx\perp^{\epsilon} Ax$. 
\end{proof}
\begin{remark}
Let $\{e_n\}$ be the standard orthonormal basis of the Hilbert space $\ell^2.$ 
Consider $ T : \ell^2 \longrightarrow \ell^2$ defined by
\begin{eqnarray*}
Te_1 & = & e_1\\
Te_n & = & \Big(\frac{1}{2} - \frac{1}{n+2}\Big)e_n,~ n >1 
\end{eqnarray*}
Then T does not satisfy the conditions of Corollary \ref{cor:finite} and \ref{cor:compact} but $T$ satisfies the condition of Theorem \ref{theorem:hilbert}. This shows that Theorem \ref{theorem:hilbert} includes a larger class of operators.
\end{remark}

Next we study approximate Birkhoff-James orthogonality  $(\perp_B^{\epsilon})$  of bounded linear operators on normed spaces. If the space $\mathbb{X}$ is reflexive and $T,A \in \mathbb{K}(\mathbb{X}, \mathbb{Y})$ then we
obtain a necessary and sufficient condition for $T \perp_B^{\epsilon} A$, provided $M_T=D\cup (-D)$ ($D$ being a nonempty compact connected subset of $S_{\mathbb{X}}$).

\begin{theorem}\label{theorem:compact}
Let $\mathbb{X}$ be a reflexive Banach space and $\mathbb{Y}$ be a normed  space. Let $T,A \in \mathbb{K}(\mathbb X, \mathbb Y)$ and $M_T=D\cup (-D),$ where $D$ is a nonempty compact connected subset of $S_{\mathbb{X}}$. Then $T\bot_B^{\epsilon} A$ if and only if there exists $x\in M_T$ such that $\|Tx+ \lambda Ax\|^2 \geq \|T\|^2 -2\epsilon \|T\|\|\lambda A\|$.\\
 Moreover if $M_T\subseteq M_A$, then $T\bot_B^{\epsilon} A$ if and only if $Tx \bot_B^{\epsilon} Ax$. 
\end{theorem}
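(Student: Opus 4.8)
The plan is to mimic the structure of the proof of Theorem~\ref{theorem:operator} but adapted to the quadratic inequality defining $\perp_B^\epsilon$, using the connectedness of $D$ in place of the two-point $\{x,y\}$ argument. The sufficiency direction is the easy one: if there exists $x \in M_T$ with $\|Tx+\lambda Ax\|^2 \geq \|T\|^2 - 2\epsilon\|T\|\|\lambda A\|$ for all $\lambda$, then since $\|T+\lambda A\|^2 \geq \|(T+\lambda A)x\|^2 = \|Tx+\lambda Ax\|^2$ we immediately get $T \perp_B^\epsilon A$; this is essentially the same one-line computation that closes part (i) of Theorem~\ref{theorem:hilbert}. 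The ``moreover'' clause is then immediate: if additionally $M_T \subseteq M_A$, then $\|Tx\| = \|T\|$ and $\|Ax\| = \|A\|$ for the witnessing $x$, so the inequality $\|Tx+\lambda Ax\|^2 \geq \|T\|^2 - 2\epsilon\|T\|\|\lambda A\|$ is literally the statement $Tx \perp_B^\epsilon Ax$, and conversely.

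For the necessity direction I would invoke Theorem~2.3 of \cite{CSW} (the $x \perp_B^\epsilon y$ characterization quoted as Theorem~1.1 above, applied in the operator space $\mathbb{K}(\mathbb{X},\mathbb{Y})$): $T \perp_B^\epsilon A$ gives $S \in \mathrm{Lin}\{T,A\}$ with $T \perp_B S$ and $\|S - A\| \leq \epsilon\|A\|$. Write $S = \alpha T + \beta A$; since $T \perp_B S$ we may as well absorb the $T$-component and take $S$ so that $T \perp_B S$, $S$ compact (a linear combination of compact operators), and $\|S-A\| \leq \epsilon\|A\|$. Now apply the characterization of $T \perp_B S$ for compact operators on a reflexive space: there is $x_0 \in M_T$ with $Sx_0 \in (Tx_0)^+$ and some $y_0 \in M_T$ with $Sy_0 \in (Ty_0)^-$ (this is exactly Theorem~\ref{theorem:ortho}, which holds in the reflexive/compact setting by the argument given there — or one cites the relevant result of \cite{SP} for compact operators on reflexive spaces). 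The point of the hypothesis $M_T = D \cup (-D)$ with $D$ compact and connected is that $M_T$ is ``connected modulo sign'': one then wants to produce a single $x \in M_T$ that works for all $\lambda$, i.e.\ for which $Sx \in (Tx)^+ \cap (Tx)^-$ equivalently $Tx \perp_B Sx$.

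The key step — and the main obstacle — is the connectedness/intermediate-value argument that upgrades ``$x_0$ handles $\lambda \geq 0$ and $y_0$ handles $\lambda \leq 0$'' to ``some single $x$ handles all $\lambda$''. Concretely, I would consider, for $x \in D$, the function $g(x) = \langle \text{or a Gateaux-type functional} \rangle$ measuring on which side of $Tx$ the vector $Sx$ lies; in the Hilbert-like reductions one uses the inner product, but in a general normed $\mathbb{Y}$ one must use supporting functionals at $Tx$. Using compactness of $D$, the norm attainment $x \mapsto \|Tx\| = \|T\|$ is constant on $D$, and one shows the relevant ``side'' functional $\lambda \mapsto \sup_{f \in J(Tx)} f(\lambda Sx)$ (where $J(Tx)$ is the set of norming functionals) varies in a way that, combined with the existence of $x_0$ on the $+$ side and $y_0$ on the $-$ side and connectedness of $D$, forces a point $x \in D$ with $Tx \perp_B Sx$. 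Then for that $x$, $\|Tx+\lambda Sx\| \geq \|Tx\| = \|T\|$ for all $\lambda$, and translating back via $\|Sx - Ax\| \leq \|S-A\| \leq \epsilon\|A\|$ and the convexity estimate
\[
\|Tx+\lambda Ax\| \geq \|Tx + \lambda Sx\| - |\lambda|\,\|Ax - Sx\| \geq \|T\| - \epsilon|\lambda|\,\|A\|,
\]
squaring and using $(\|T\| - \epsilon|\lambda|\|A\|)^2 \geq \|T\|^2 - 2\epsilon\|T\|\|\lambda A\|$ yields the desired inequality. I expect the technical friction to be entirely in making the ``side function'' well-defined and continuous on $D$ when $\mathbb{Y}$ is merely a normed space (one may need to pass to a selection of norming functionals, or argue via nets and the compactness of $D$ together with weak-$*$ compactness of dual balls), and in checking that connectedness of $D$ is genuinely what glues the $+$-witness and the $-$-witness together rather than needing a stronger hypothesis.
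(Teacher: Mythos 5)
Your skeleton coincides with the paper's: sufficiency and the ``moreover'' clause are the same one-line computations, and for necessity both you and the paper pass through Theorem 2.3 of \cite{CSW} to get a compact $S$ with $T\perp_B S$, $\|S-A\|\le\epsilon\|A\|$, and then perturb. The one place where the hypothesis $M_T=D\cup(-D)$ with $D$ compact and connected is actually used is in producing a \emph{single} $x\in M_T$ with $Tx\perp_B Sx$; the paper obtains this in one stroke by citing Theorem 2.1 of \cite{PSG}, whereas you attempt to rederive it from the two-point characterization plus an intermediate-value argument and leave exactly that step unfinished (you flag it yourself as the main obstacle). This is a genuine gap, and the route you sketch does not work as stated: when $\mathbb{Y}$ is not smooth there is in general no continuous selection of norming functionals, and the map $x\mapsto\sup_{f\in J(Tx)}f(Sx)$ is only upper semicontinuous (the corresponding infimum only lower semicontinuous), so the intermediate value theorem cannot be applied to your ``side function'' directly.

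The step can be closed either by citing \cite{PSG} (Theorem 2.1), as the paper does, or by the following elementary argument which is what the connectedness hypothesis is really for: set $D^{+}=\{x\in D:\ Sx\in (Tx)^{+}\}$ and $D^{-}=\{x\in D:\ Sx\in (Tx)^{-}\}$. Convexity of $\lambda\mapsto\|Tx+\lambda Sx\|$ gives $D=D^{+}\cup D^{-}$; both sets are closed in $D$, since the defining inequalities pass to limits by continuity of $T$ and $S$; and both are nonempty, because the two-point result (Theorem 2.1 of \cite{SPM}, valid for compact operators on a reflexive space) gives witnesses in $M_T=D\cup(-D)$, and $Sx\in(Tx)^{\pm}$ if and only if $S(-x)\in(T(-x))^{\pm}$, so the witnesses may be taken in $D$. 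Connectedness of $D$ then forces $D^{+}\cap D^{-}\neq\emptyset$, i.e., some $x\in D$ with $Tx\perp_B Sx$, after which your perturbation estimate finishes the proof (with the minor remark that when $\|T\|-\epsilon|\lambda|\|A\|<0$ the target inequality $\|Tx+\lambda Ax\|^{2}\ge\|T\|^{2}-2\epsilon\|T\|\|\lambda A\|$ holds trivially, so the squaring step is legitimate; also, no ``absorption of the $T$-component'' of $S$ is needed --- all that is used is that $S$ is compact, $T\perp_B S$ and $\|S-A\|\le\epsilon\|A\|$).
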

\begin{proof}
We first prove the necessary part. \\
Since $T\bot_B^{\epsilon} A$, by Theorem 2.3 in \cite{CSW} there exists a linear operator $S\in Span \{T,~ A\}$ such that $T\bot_B S$ and $\|S-A\|\leq \epsilon \|A\|$. Since $T, A$ are compact and $S\in Span \{T, A\}$, $S$ is also compact. Therefore, by Theorem 2.1 in \cite{PSG} there exists $x\in M_T$ such that $Tx \bot_B Sx$. Clearly $\|Sx-Ax\|\leq \|S-A\|\leq \epsilon \|A\|$. Thus for any $\lambda \in \mathbb{R}$, $\|Tx + \lambda Ax\|^2 = \|(Tx+ \lambda Sx) - \lambda( Sx - Ax)\|^2 \geq \|Tx + \lambda Sx\|^2 - 2|\lambda| \|Tx+ \lambda Sx\| \|Sx - Ax\|\geq \|Tx\|^2 - 2 \epsilon|\lambda|\|Tx\|\|A\|=\|T\|^2- 2\epsilon\|T\|\|\lambda A\|$. \\
For the sufficient part, let $\|Tx+ \lambda Ax\|^2 \geq \|T\|^2 -2\epsilon \|T\|\|\lambda A\|$ for some $x\in M_T$. Then for any $\lambda \in \mathbb{R}$, $\|T+\lambda A\|^2\geq \|Tx + \lambda Ax\|^2 \geq \|T\|^2- 2\epsilon\|T\|\|\lambda A\|$. Hence $T\bot_B^{\epsilon} A$.\\
Moreover, let $M_T\subseteq M_A$. Then $T\bot_B^{\epsilon} A \Rightarrow \|Tx+ \lambda Ax\|^2 \geq \|T\|^2 -2\epsilon \|T\|\|\lambda A\|$ for some $x\in M_T$. This implies that $\|Tx+ \lambda Ax\|^2 \geq \|Tx\|^2 -2\epsilon \|Tx\|\|\lambda Ax\| \Rightarrow Tx \bot_B^{\epsilon} Ax$. \\
Conversely, let for some $x\in M_T\subseteq M_A$, $Tx \bot_B^{\epsilon} Ax$. Then for any $\lambda \in \mathbb{R}$, $\|T+ \lambda A\|^2 \geq \|Tx+ \lambda Ax\|^2 \geq \|Tx\|^2 - 2 \epsilon \|Tx\|\|\lambda Ax\|= \|T\|^2- 2 \epsilon \|T\|\|\lambda A\|$. Thus $T\bot_B^{\epsilon} A$.
\end{proof}

In general the norm attainment set of a compact operator may not be of the above form $ D \cup (- D).$ In the next theorem we give a complete characterization of approximate Birkhoff-James orthogonality $(\perp_B^{\epsilon})$ of compact linear operators without any restriction on the norm attainment set.

\begin{theorem}\label{theorem:compact complete}
Let $\mathbb{X}$ be a reflexive Banach space and $\mathbb{Y}$ be a normed  space. Let $T,A \in \mathbb{K}(\mathbb X, \mathbb Y)$. Then $T\bot_B^{\epsilon} A$ if and only if there exists $x,y \in M_T$ such that $\|Tx+ \lambda Ax\|^2 \geq \|T\|^2 -2\epsilon \|T\|\|\lambda A\|$ for all $\lambda \geq 0$ and $\|Ty+ \lambda Ay\|^2 \geq \|T\|^2 -2\epsilon \|T\|\|\lambda A\|$ for all $\lambda \leq 0$. 
\end{theorem}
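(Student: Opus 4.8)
The plan is to reduce this to the combination of Theorem~\ref{theorem:compact complete}'s predecessors: namely Theorem~2.3 of \cite{CSW} (which replaces $\perp_B^{\epsilon}$ by exact $\perp_B$ plus a nearby operator $S$), together with the characterization of exact Birkhoff-James orthogonality of compact operators in terms of the norm attainment set (Theorem~2.2 of \cite{S}, reproved here as Theorem~\ref{theorem:ortho}, in its version valid on reflexive Banach spaces as used in the proof of Theorem~\ref{theorem:compact}). The sufficient direction is immediate and should be dispatched first: if such $x,y$ exist, then for any $\lambda \geq 0$ we have $\|T+\lambda A\|^2 \geq \|(T+\lambda A)x\|^2 = \|Tx+\lambda Ax\|^2 \geq \|T\|^2 - 2\epsilon\|T\|\|\lambda A\|$, and symmetrically for $\lambda \leq 0$ using $y$; hence $T\bot_B^{\epsilon}A$.

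For the necessary direction, I would start from $T\bot_B^{\epsilon}A$ and invoke Theorem~2.3 of \cite{CSW} to get $S \in \mathrm{Span}\{T,A\}$ with $T\bot_B S$ and $\|S-A\|\leq \epsilon\|A\|$; note $S$ is compact since $T,A$ are. Now apply the exact-orthogonality characterization for compact operators on a reflexive space: $T\bot_B S$ yields $x,y \in M_T$ with $Sx \in (Tx)^+$ and $Sy \in (Ty)^-$. (This is exactly the content extracted in the proof of Theorem~\ref{theorem:operator}/Theorem~\ref{theorem:compact}; the point is that one does \emph{not} get a single common point, which is precisely why the statement here, unlike Theorem~\ref{theorem:compact}, splits into two points $x$ and $y$.) Then I would run the same inequality chain as in the proof of Theorem~\ref{theorem:compact}, but one-sidedly. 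For $\lambda \geq 0$, using $Sx \in (Tx)^+$, i.e. $\|Tx+\lambda Sx\|\geq \|Tx\| = \|T\|$ for $\lambda\geq 0$, write
\[
\|Tx+\lambda Ax\|^2 = \|(Tx+\lambda Sx) - \lambda(Sx-Ax)\|^2 \geq \|Tx+\lambda Sx\|^2 - 2\lambda\|Tx+\lambda Sx\|\,\|Sx-Ax\|.
\]
Here the subtlety is that I need the bound $\|Tx+\lambda Sx\|\,\|Sx-Ax\| \leq \|Tx\|\,\|Sx-Ax\|$ to fail to go the wrong way; but in the proof of Theorem~\ref{theorem:compact} this was handled by noting $\|Tx+\lambda Sx\| \geq \|Tx\|$ and... actually one wants an \emph{upper} bound on the cross term. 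The clean way, which I would adopt, is the elementary estimate $\|u-v\|^2 \geq \|u\|^2 - 2\|v\|\,\|u-v\|$ combined differently, or more simply: $\|Tx+\lambda Ax\| \geq \|Tx+\lambda Sx\| - \lambda\|Sx-Ax\| \geq \|T\| - \lambda\epsilon\|A\|$ when the right side is nonnegative, and separately handle large $\lambda$; in fact the inequality $\|Tx+\lambda Ax\|^2 \geq \|T\|^2 - 2\epsilon\|T\|\|\lambda A\|$ follows directly from $Tx \bot_B^{\epsilon_0}(\text{something})$—cleanest is to mimic verbatim the computation in Theorem~\ref{theorem:compact}'s necessity proof, which already produces $\|Tx+\lambda Ax\|^2 \geq \|T\|^2 - 2\epsilon\|T\|\|\lambda A\|$ for \emph{all} $\lambda$ once one has $Tx\bot_B Sx$; so restricting to $\lambda\geq 0$ (where only $Sx\in(Tx)^+$ is needed, not full $Tx\bot_B Sx$) is harmless, and symmetrically $\|Ty+\lambda Ay\|^2 \geq \|T\|^2 - 2\epsilon\|T\|\|\lambda A\|$ for $\lambda\leq 0$.

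The only genuine obstacle is the passage from $T\bot_B S$ to the pair $(x,y)\in M_T$ with the one-sided conditions $Sx\in(Tx)^+$, $Sy\in(Ty)^-$: this is where reflexivity and compactness of $T$ (so $M_T\neq\emptyset$ and norm attainment of $T+\lambda S$) enter, and it is essentially Theorem~\ref{theorem:operator} applied with $\epsilon=0$, or directly Theorem~2.1 of \cite{PSG} together with the $(Tx)^{\pm}$ decomposition. Everything else is the routine triangle-inequality bookkeeping already carried out in Theorems~\ref{theorem:compact} and \ref{theorem:operator}. I would therefore organize the write-up as: (1) sufficiency in two lines; (2) necessity: quote \cite{CSW} Theorem~2.3 for $S$; (3) quote the compact exact-orthogonality result for the pair $x,y$; (4) the two symmetric inequality chains.
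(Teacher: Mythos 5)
Your proposal is correct and follows essentially the same route as the paper: sufficiency by evaluating at $x$ and $y$, and necessity via Theorem 2.3 of \cite{CSW} to obtain a compact $S$ with $T\perp_B S$, $\|S-A\|\le\epsilon\|A\|$, then the decomposition into $x,y\in M_T$ with $Sx\in(Tx)^+$, $Sy\in(Ty)^-$, followed by the triangle-inequality estimates (your case analysis for large $\lambda$ is in fact a more careful version of the paper's displayed chain). The only adjustment needed is the citation for the key lemma: in the reflexive-plus-compact setting it is Theorem 2.1 of \cite{SPM} (not Theorem 2.2 of \cite{S}, which is finite-dimensional, nor Theorem 2.1 of \cite{PSG}, which needs $M_T=D\cup(-D)$), and this is exactly what the paper invokes.
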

\begin{proof}
We first prove the necessary part. \\
Since $T\bot_B^{\epsilon} A$, by Theorem 2.3 in \cite{CSW} there exists a linear operator $S \in span \{T,~ A\}$ such that $T\bot_B S$ and $\|S-A\|\leq \epsilon \|A\|$. Since $T, A$ are compact and $S\in Span \{T,A\}$, $S$ is also compact. Therefore, by Theorem 2.1 in \cite{SPM} there exists $x,y\in M_T$ such that $Sx\in (Tx)^+$ and $Sy\in (Ty)^-$. Clearly $\|Sx-Ax\|\leq \|S-A\|\leq \epsilon \|A\|$. Thus for any $\lambda \geq 0$, $\|Tx + \lambda Ax\|^2 = \|(Tx+ \lambda Sx) - \lambda( Sx - Ax)\|^2 \geq \|Tx + \lambda Sx\|^2 - 2|\lambda| \|Tx+ \lambda Sx\| \|Sx - Ax\|\geq \|Tx\|^2 - 2 \epsilon|\lambda|\|Tx\|\|A\|=\|T\|^2 - 2 \epsilon|\lambda|\|T\|\|A\| $. Similarly using $Sy \in (Ty)^-$ and $\|S-A\|\leq \epsilon \|A\|$, it can be shown that $\|Ty+ \lambda Ay\|^2 \geq \|T\|^2 -2\epsilon \|T\|\|\lambda A\|$ for all $\lambda \leq 0$.\\
For the sufficient part, suppose that there exists $x,y \in M_T$ such that $\|Tx+ \lambda Ax\|^2 \geq \|T\|^2 -2\epsilon \|T\|\|\lambda A\|$ for all $\lambda \geq 0$ and $\|Ty+ \lambda Ay\|^2 \geq \|T\|^2 -2\epsilon \|T\|\|\lambda A\|$ for all $\lambda \leq 0$. Then for any $\lambda \geq 0$, $\|T+\lambda A\|^2 \geq \|Tx + \lambda Ax\|^2 \geq \|T\|^2 - 2 \epsilon \|T\|\|\lambda A\|$. Similarly, for any $\lambda \leq 0$, $\|T+\lambda A\|^2 \geq \| Ty + \lambda Ay \|^2 \geq \|T\|^2 - 2 \epsilon \|T\|\|\lambda A\|$. Therefore, $T\bot_B^{\epsilon} A$.
\end{proof}

For bounded linear operator defined on any normed space the situation is far more complicated since the norm attainment set may be empty in this case. In the next theorem we characterize approximate Birkhoff-James orthogonality $(\perp_B^{\epsilon})$ of bounded linear operators defined on any normed space.

\begin{theorem}\label{theorem:bounded}
Let $\mathbb{X}, \mathbb{Y}$ be two normed spaces. Suppose $T\in \mathbb{B}(\mathbb{X}, \mathbb{Y})$ be nonzero. Then for any $A\in \mathbb{B}(\mathbb{X}, \mathbb{Y})$, $T\bot_B^{\epsilon} A$ if and only if either of the conditions in $(a)$ or $(b)$ holds.\\
$(a)$ There exists a sequence $\{x_n\}$ of unit vectors such that $\|Tx_n\|\longrightarrow \|T\|$ and $lim_{n\rightarrow \infty} \|Ax_n\| \leq \epsilon \|A\|$.\\
$(b)$ There exists two sequences $\{x_n\}, \{y_n\}$ of unit vectors and two sequences of positive real numbers $\{\epsilon_n\}, \{\delta_n\}$ such that\\
$(i)~\epsilon_n \longrightarrow 0,~ \delta_n \longrightarrow 0,~ \|Tx_n\|\longrightarrow \|T\|,~\|Ty_n\|\longrightarrow \|T\|$ as $n \longrightarrow \infty$.\\
$(ii)~ \|Tx_n + \lambda Ax_n\|^2 \geq (1-{\epsilon_n}^2)\|Tx_n\|^2- 2 \epsilon \sqrt{1-{\epsilon_n}^2}\|Tx_n\|\|\lambda A\|$ for all $\lambda \geq 0$.\\
$(iii)~ \|Ty_n + \lambda Ay_n\|^2 \geq (1-{\delta_n}^2)\|Ty_n\|^2- 2 \epsilon \sqrt{1-{\delta_n}^2}\|Ty_n\|\|\lambda A\|$ for all $\lambda \leq 0$.
\end{theorem}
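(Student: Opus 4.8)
The plan is to establish the two directions separately: the sufficiency is a direct estimate, while the necessity is the substantial part. For sufficiency, suppose $(a)$ holds. For arbitrary $\lambda\in\mathbb{R}$ we have $\|T+\lambda A\|\ge\|Tx_n+\lambda Ax_n\|\ge\|Tx_n\|-|\lambda|\,\|Ax_n\|$; letting $n\to\infty$ (along a subsequence where $\|Ax_n\|$ converges) gives $\|T+\lambda A\|\ge\|T\|-\epsilon\|\lambda A\|$, and squaring this (the target inequality being vacuous when $\|T\|-\epsilon\|\lambda A\|<0$) yields $T\bot_B^{\epsilon}A$. If instead $(b)$ holds, then for $\lambda\ge0$, $\|T+\lambda A\|^2\ge\|Tx_n+\lambda Ax_n\|^2$, and $(ii)$ together with $\epsilon_n\to0$, $\|Tx_n\|\to\|T\|$ gives $\|T+\lambda A\|^2\ge\|T\|^2-2\epsilon\|T\|\|\lambda A\|$ in the limit; the range $\lambda\le0$ is symmetric via $(iii)$ and $\{y_n\}$.

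For the necessity I would first invoke Theorem~2.3 of \cite{CSW}, applied in the normed space $\mathbb{B}(\mathbb{X},\mathbb{Y})$, to obtain $S\in\mathrm{Lin}\{T,A\}$ with $T\bot_B S$ and $\|S-A\|\le\epsilon\|A\|$. The decisive point of passing to $S$ is that $T\bot_B S$ gives $\|T+\lambda S\|\ge\|T\|$ \emph{exactly}, with no defect linear in $\lambda$; the $\epsilon$-slack is transferred into the term $\|S-A\|$, to be absorbed by the triangle inequality at the very end. Fix $\lambda_n\downarrow0$ and choose unit vectors $x_n$ with $\|(T+\lambda_n S)x_n\|\ge\|T+\lambda_n S\|-\lambda_n^2\ge\|T\|-\lambda_n^2$; splitting off $\lambda_n S$ shows $\|Tx_n\|\to\|T\|$, so $\{x_n\}$ is a norming sequence for $T$. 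Now distinguish two cases. If $\liminf_n\|Sx_n\|=0$, pass to a subsequence with $\|Sx_n\|\to0$; then $\limsup_n\|Ax_n\|\le\limsup_n(\|Sx_n\|+\|S-A\|)\le\epsilon\|A\|$, which is precisely condition $(a)$. If instead $\|Sx_n\|\ge\delta$ for all large $n$ and some $\delta>0$, I claim $(b)$ holds, with $\{x_n\}$ supplying $(ii)$ and an analogous sequence $\{y_n\}$ — obtained from $\|T-\mu_n S\|\ge\|T\|$, $\mu_n\downarrow0$ — supplying $(iii)$.

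The core of the second case is to produce, for the \emph{single} sequence $\{x_n\}$, the inequality $\|Tx_n+\lambda Sx_n\|\ge\sqrt{1-\epsilon_n^2}\,\|Tx_n\|$ for all $\lambda\ge0$, with $\epsilon_n\to0$. On a fixed bounded interval $\lambda\in[0,M]$ one interpolates: for $\lambda\ge\lambda_n$ write $Tx_n+\lambda Sx_n=\tfrac{\lambda}{\lambda_n}(Tx_n+\lambda_n Sx_n)-(\tfrac{\lambda}{\lambda_n}-1)Tx_n$ and apply the reverse triangle inequality together with $\|Tx_n\|\le\|T\|$ to get $\|Tx_n+\lambda Sx_n\|\ge\|T\|-\lambda\lambda_n$; for $0\le\lambda\le\lambda_n$ use $\|Tx_n+\lambda Sx_n\|\ge\|Tx_n\|-\lambda_n\|S\|$. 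In both ranges the lower bound exceeds $\sqrt{1-\epsilon_n^2}\,\|Tx_n\|$ for a suitable $\epsilon_n\to0$ (since $\|Tx_n\|\to\|T\|>0$). For $\lambda>M$ the inequality ultimately needed in $(ii)$ is automatic once $M$ is chosen large: if $\epsilon>0$, the right-hand side of $(ii)$ is eventually negative once $M>\|T\|/(2\epsilon\|A\|)$; if $\epsilon=0$ (so $S=A$, $\|Sx_n\|\ge\delta$), then $\|Tx_n+\lambda Sx_n\|\ge\lambda\|Sx_n\|-\|Tx_n\|\ge\|Tx_n\|$ once $M>2\|T\|/\delta$. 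Finally, combining $\|Tx_n+\lambda Sx_n\|\ge\sqrt{1-\epsilon_n^2}\,\|Tx_n\|$ with $\|Sx_n-Ax_n\|\le\|S-A\|\le\epsilon\|A\|$ by the triangle inequality and squaring yields $(ii)$, and $(iii)$ follows symmetrically. Running the two-case dichotomy independently for the $\{x_n\}$ and $\{y_n\}$ extractions produces the disjunction ``$(a)$ or $(b)$''.

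The main obstacle is the uniformity in $\lambda$ built into $(ii)$ and $(iii)$: one sequence must serve all $\lambda\ge0$, whereas exact orthogonality $T\bot_B S$ only provides, for each fixed small $\lambda$, a near-maximizing vector of $T+\lambda S$. Overcoming this hinges on (i) replacing $A$ by $S$ so that the interpolation from $\lambda_n$ to larger $\lambda$ does not accumulate a non-vanishing error (a direct argument with $A$ would pick up a term of order $\epsilon\|A\|$ that does not decay), and (ii) observing that the estimate is genuinely needed only on a bounded $\lambda$-interval, since outside it condition $(ii)$ is either vacuous (when $\epsilon>0$) or forced by $\|Sx_n\|$ being bounded away from $0$.
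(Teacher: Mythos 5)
Your proof is correct, and while its skeleton matches the paper's (both directions start the same way: the sufficiency is the same direct estimate, and the necessity begins by invoking Theorem 2.3 of \cite{CSW} to replace $A$ by $S\in\mathrm{Lin}\{T,A\}$ with $T\bot_B S$, $\|S-A\|\le\epsilon\|A\|$, and ends with the same triangle-inequality transfer from $S$ back to $A$), the middle step is genuinely different. The paper simply cites Theorem 2.4 of \cite{SPM}, which says that $T\bot_B S$ forces either a norming sequence with $\|Sx_n\|\to 0$ or two norming sequences with $Sx_n\in (Tx_n)^{+(\epsilon_n)}$ and $Sy_n\in (Ty_n)^{-(\delta_n)}$, i.e.\ the inequality $\|Tx_n+\lambda Sx_n\|\ge\sqrt{1-\epsilon_n^2}\|Tx_n\|$ for \emph{all} $\lambda\ge 0$; conditions $(a)$ and $(b)$ then drop out immediately. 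You instead build the sequences by hand: near-maximizers of $T+\lambda_n S$ with $\lambda_n\downarrow 0$, the dichotomy on $\liminf\|Sx_n\|$, and the convex-combination identity $Tx_n+\lambda Sx_n=\tfrac{\lambda}{\lambda_n}(Tx_n+\lambda_n Sx_n)-(\tfrac{\lambda}{\lambda_n}-1)Tx_n$ to get the lower bound $\|T\|-\lambda\lambda_n$ — but only on a bounded interval $[0,M]$, after which you patch $(ii)$ for $\lambda>M$ by the vacuousness of its right-hand side when $\epsilon>0$, and by $S=A$ together with $\|Sx_n\|\ge\delta$ when $\epsilon=0$. This is a weaker statement than \cite{SPM} Theorem 2.4 (you never get the $S$-inequality for all $\lambda\ge 0$), but it suffices for the target, and it makes the proof self-contained at the cost of the extra case analysis in $\epsilon$ and $M$; the paper's route is shorter and yields the stronger intermediate property. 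Two small housekeeping points you should make explicit: finitely many initial indices may have to be discarded so that $\epsilon_n$ (chosen from $\|Tx_n\|-\lambda_n\max(M,\|S\|)\ge\sqrt{1-\epsilon_n^2}\|Tx_n\|$) lies in $(0,1)$, and condition $(a)$ literally requires a limit, so pass to a further subsequence on which $\|Ax_n\|$ converges after your $\limsup$ estimate.
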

\begin{proof}
We first prove the necessary part. Since $T\bot_B^{\epsilon} A$ so by Theorem 2.3 in \cite{CSW} there exists $S \in span\{T, A\}$ such that $T\bot_B S$ and $\|S- A\|\leq \epsilon \|A\|$. Since $T \bot_B S$ so by Theorem 2.4   in \cite{SPM} either $(i)$ or $(ii)$ holds.\\
$(i)$ There exists a sequence $\{x_n\}$ of unit vectors such that $\|Tx_n\|\longrightarrow \|T\|$ and $\|Sx_n\| \longrightarrow 0$.\\
$(ii)$ There exists two sequences $\{x_n\}, \{y_n\}$ of unit vectors and two sequences of positive real numbers $\{\epsilon_n\}, \{\delta_n\}$ such that $\epsilon_n \longrightarrow 0,~ \delta_n \longrightarrow 0,~ \|Tx_n\|\longrightarrow \|T\|,~\|Ty_n\|\longrightarrow \|T\|$ as $n \longrightarrow \infty$ and $Sx_n \in (Tx_n)^{(+ \epsilon_n)}, $ $ Sy_n \in (Ty_n)^{(-\delta_n)}$ for all $n \in \mathbb{N}$.\\
First suppose that $(i)$ holds. Then $\|Sx_n- Ax_n\| \leq \|S-A\| \leq \epsilon \|A\|$. This implies that $lim_{n\rightarrow \infty} \|Ax_n\| \leq \epsilon \|A\|$.\\
Now suppose that $(ii)$ holds. Then for all $\lambda \geq 0$ we have $\|Tx_n+ \lambda Ax_n\|^2= \|(Tx_n + \lambda Sx_n) + \lambda (Ax_n-Sx_n)\|^2 \geq |\|Tx_n+ \lambda Sx_n\|- |\lambda|\|Ax_n- Sx_n\||^2 \geq \|Tx_n+ \lambda Sx_n\|^2 -2 \|Tx_n + \lambda Sx_n\||\lambda|\|Sx_n- Ax_n\| \geq (1- {\epsilon_n}^2)\|Tx_n\|^2 - 2 \epsilon \sqrt{1-{\epsilon_n}^2}\|Tx_n\|\|\lambda A\|$. Similarly using the sequence $\{y_n\}$ we obtain condition $(iii)$. This proves the necessary part of the theorem. \\
Now for the sufficient part let $(a)$ holds. Then for any $\lambda \in \mathbb{R}$ we have $\|T+ \lambda A\|^2 \geq \|(T+\lambda A)x_n\|^2 \geq |\|Tx_n\|- |\lambda|\|Ax_n\||^2 \geq \|Tx_n\|^2-2 \|Tx_n\||\lambda|\|Ax_n\|$. Letting $n\longrightarrow \infty$ we have $\|T+ \lambda A\|^2 \geq \|T\|^2 -2 \epsilon \|T\|\|\lambda A\|$. Hence $T \bot_B^{\epsilon} A$. Now suppose conditions in $(b)$ hold. Then for any $\lambda \geq 0$ we have $\|T+ \lambda A\|^2 \geq \|(T+ \lambda A)x_n\|^2 \geq (1-{\epsilon_n}^2)\|Tx_n\|^2- 2 \epsilon \sqrt{1-{\epsilon_n}^2}\|Tx_n\|\|\lambda A\|$. Letting $n \longrightarrow \infty$ we have $\|T+ \lambda A\|^2 \geq \|T\|^2 -2 \epsilon \|T\|\|\lambda A\|$. Similarly for any $\lambda \leq 0$ we have $\|T+ \lambda A\|^2 \geq \|(T+ \lambda A)y_n\|^2 \geq (1-{\delta_n}^2)\|Ty_n\|^2- 2 \epsilon \sqrt{1-{\delta_n}^2}\|Ty_n\|\|\lambda A\|$. Letting $n \longrightarrow \infty$ we have $\|T+ \lambda A\|^2 \geq \|T\|^2 -2 \epsilon \|T\|\|\lambda A\|$. Hence $T \bot_B^{\epsilon} A$. 
\end{proof}

\end{document}